\documentclass[a4paper,reqno,12pt]{amsart}

\usepackage[english]{babel}
\usepackage[utf8]{inputenc}
\usepackage[T1]{fontenc}
\usepackage{amssymb,amsmath,amsthm}
\usepackage[colorlinks=true, urlcolor=blue, linkcolor=blue, citecolor=blue]{hyperref}
\usepackage{graphicx,color}
\usepackage[all]{xy}
\numberwithin{equation}{section}
\usepackage[style=alphabetic,
maxnames=50,maxalphanames=4]{biblatex}
\usepackage{caption}
\usepackage[all]{xy}
\usepackage{wasysym}
\usepackage{geometry}
\geometry{
	a4paper,
	total={170mm,257mm},
	left=20mm,
	top=20mm,
}
\usepackage[dvipsnames]{xcolor}
\usepackage{thm-restate}
\usepackage{fancyvrb}
\usepackage{comment}
\usepackage{cleveref}
\renewbibmacro{in:}{}
\theoremstyle{plain}
\newtheorem{Theorem}{Theorem}[section]
\newtheorem{Corollary}[Theorem]{Corollary}
\newtheorem{Lemma}[Theorem]{Lemma}
\newtheorem{Proposition}[Theorem]{Proposition}

\theoremstyle{definition}
\newtheorem{Definition}[Theorem]{Definition}
\newtheorem{Example}[Theorem]{Example}
\newtheorem{Remark}[Theorem]{Remark}

\linespread{1.2}
\setlength{\parindent}{0pt}
\setlength{\parskip}{.25em}



\def\CC{{\mathbb C}}


\def\ker{{\rm ker}\,}

\def\sym#1#2{\mbox{\rm Sym}_{#1}(#2)}

\def\sym{{\mathcal S}}

\def\cl#1{{\mathcal #1}}

\def\VaVa{{\cl V}\kern-5pt {\cl V}}


\def\PP{{\mathbb P}}

\def\C{{{\mathbb C}}}


\newcommand{\blue}{\color{blue}}

\addbibresource{bib.bib}

\begin{document}
\author{Giorgio Ottaviani}

\address{Dipartimento di Matematica e Informatica ``Ulisse Dini'',  University of Florence, viale Morgagni 67/A, I-50134, Florence, Italy}
\email{giorgio.ottaviani@unifi.it}
\author{Ettore Teixeira Turatti}
\address{Department of Mathematics and Statistics, UiT The Arctic University of Norway, Forskningsparken 1 B 485, Troms\o, Norway}
\email{ettore.t.turatti@uit.no}

\subjclass[2020]{14N07, 14N05, 11E25}
\keywords{Sum of squares, secant variety, identifiability}

\title{Generalized identifiability of sums of squares}

\begin{abstract}
Let $f$ be a homogeneous polynomial of even degree $d$. We study the decompositions $f=\sum_{i=1}^r f_i^2$ where $\deg f_i=d/2$. The minimal number of summands $r$ is called the $2$-rank of $f$,
so that the polynomials having $2$-rank equal to $1$ are exactly the squares.
Such decompositions are never unique and they are divided into $\mathrm{O}(r)$-orbits, the problem becomes counting how many different $\mathrm{O}(r)$-orbits of decomposition exist. We say that $f$ is $\mathrm{O}(r)$-identifiable if there is a unique $\mathrm{O}(r)$-orbit. We give sufficient conditions for generic and specific $\mathrm{O}(r)$-identifiability. Moreover, we show the generic $\mathrm{O}(r)$-identifiability of ternary forms. 
\end{abstract}
\maketitle

\section{Introduction}

Let $\sym^d\CC^{n+1}=\CC[x_0,\dots,x_n]_d$ be the vector space of homogeneous polynomials of degree $d$. Suppose $d$ is even, then for every $f\in \sym^d\CC^{n+1}$, there exists a minimal number $r$ and polynomials $f_1,\dots,f_r\in \sym^{d/2}\CC^{n+1}$ such that $\label{eq: decomp}
f=\sum_{i=1}^{r}f_i^2.$ This decomposition is named a decomposition of $f$ as sum of squares. The sum of squares decomposition has a huge interest in applications since its real version is a certificate of nonnegativity for polynomials of even degree {\cite{Mohab, Blek}}.

\begin{Definition}
Denote $\mathrm{Sq}_{d,n}=\{g^2\mid g\in \sym^{d/2}\CC^{n+1}\}$ the variety of squares,
 which consist of all polynomials having $2$-rank equal to one. In classic terms, $\mathrm{Sq}_{d,n}$ can be understood as the affine cone over $\mathrm{Sq}_{d,n}^\PP=sq(\PP\sym^{d/2}\CC^{n+1})$, where $sq$ is the \emph{square} embedding $$sq:\PP\sym^{d/2}\CC^{n+1}\to \PP\sym^d\CC^{n+1},\ [f]\mapsto [f^2].$$

The $2$-rank of $f$, denoted $\mathrm{rk}_2(f)$, is the smallest $r$ such that $$f\in \sigma_r^\circ(\mathrm{Sq}_{d,n})\colon =\left\{\sum_{i=1}^{r} g_i^2\mid g_i\in \sym^{d/2}\CC^{n+1} \right\}.$$ In the above definition some $g_i$ are allowed to be zero, so that all sums with $r'\le r$ summands belong to $\sigma_r^\circ(\mathrm{Sq}_{d,n})$. Moreover, notice that since the polynomials are defined over $\CC$, we do not need to consider scalars $\lambda_i$ multiplying $g_i^2$, since in $\sum_{i=1}^r \lambda_ig_i^2$ the $\lambda_i$ can be absorbed into $g_i^2$.
{The affine $r$-secant variety
 is by definition the following closure
 $$\sigma_r(\mathrm{Sq}_{d,n})\colon=\overline{\sigma_r^\circ(\mathrm{Sq}_{d,n}).}$$
 }

\end{Definition}

The generic $2$-rank is the smallest number $r_g$ such that $\sigma_{r_g}(\mathrm{Sq}_{d,n})=\sym^d\CC^{n+1}$. An upper bound for the {generic} $2$-rank $r_g$ is $2^n$, and for fixed $n$ this bound is optimal for all sufficiently large $d$ \cite[Theorem 4]{FOS}.

\begin{Remark}
    Usually the $r$-secant variety is defined in the projective setting, however, due to our interest in the decompositions, it is more natural to consider the affine cone over the $r$-secant variety. Therefore, throughout the text $\sigma_r(\mathrm{Sq}_{d,n})$ will denote the affine cone over $\sigma_r(\mathrm{Sq}_{d,n}^\PP)$, the projective $r$-secant variety of $\mathrm{Sq}_{d,n}^\PP$.
\end{Remark}

In the case of Waring decompositions $\sum f_i^d$, where $\deg f_i=1$, it is known that apart from special cases the decomposition is generically identifiable (namely unique up to permutations and scalar multiplications of $f_i$ by $d$-roots of unity) for polynomials of subgeneric rank \cite{COV}. The identifiability for decompositions $\sum f_i^k$, where $\deg f_i=d/k$ and $d$ is divisible by $k$, has been studied recently in \cite{BCOM, CP} for $3\leq k<d$. 

The case $k=2$ we study in this paper is particular.  The space of all minimal sums of squares decompositions of $f\in\sym^d\CC^{n+1}$ was denoted as $\mathrm{SOS}_r(f)=\{(f_1,\dots,f_r)\in\prod_{i=1}^r\sym^{d/2}\C^{n+1}|f=\sum_{i=1}^rf_i^2\}$ in \cite{FOST}, notice that each decomposition with $r$ squares as summands
create a whole $\mathrm O(r)$-orbit of decompositions, by the trivial identity
\begin{equation}\label{eq:O(r)orbit}\sum_{i=1}^r\left(\sum_{j=1}^rg_{ij}f_j\right)^2=\sum_{i=1}^rf_i^2\end{equation}
for every $\left(g_{ij}\right)\in\mathrm O(r)$. 
Observe that both permutations and multiplications by the $2$-roots of unity $\pm 1$ make a subgroup of the orthogonal group. {We choose to discuss the $\mathrm{O}(r)$-orbits of decompositions, as they arise from a natural and well-understood action of $\mathrm{O}(r)$, rather than considering the quotient by this action.} 
This choice is also consistent with the notation utilized in \cite{FOST}. {In other words we consider any decompositions $f=\sum_{i=1}^r f_i^2$ as the element $(f_1,\dots,f_r)\in\prod_{i=1}^r\sym^{d/2}\C^{n+1}$ and not as an element in the Hilbert scheme of $r$ points in $\PP\sym^{d/2}\CC^{n+1}$.}

Although the identifiability of the decomposition in sum of squares is not possible for $r\geq 2$, due to the $\mathrm{O}(r)$-orbit,  we may relax this notion. We set accordingly the following definition.

\begin{Definition}
{An element $f$} with a minimal decomposition $
f=\sum_{i=1}^r f_i^2
$
is called {$\mathrm{O}(r)$-}identifiable if there is a unique $\mathrm{O}(r)$-orbit of decompositions of $f$, according to \eqref{eq:O(r)orbit}.
{We say $\sigma_r(\mathrm{Sq}_{d,n})$ is generically $\mathrm O(r)$-identifiable if the general element of $\sigma_r(\mathrm{Sq}_{d,n})$ is
{$\mathrm{O}(r)$-}identifiable.}
\end{Definition}

This orbit also impacts the expected dimension.  We have a dominant map
$$
\psi:\prod_{i=1}^r \sym^{d/2}\CC^{n+1}\to \sigma_r(\mathrm{Sq}_{d,n}),\ (f_1,\dots,f_r)\mapsto \sum_{i=1}^r f_i^2.
$$
The fibre of a generic point $f$ is $\psi^{-1}(f)=\{(f_1,\dots,f_r)\mid f=  \sum_{i=1}^r f_i^2\}$, it contains the $\mathrm{O}(r)$-orbit, so $\dim (\psi^{-1}(f))\geq \binom{r}{ 2}=\dim \mathrm{O}(r)$.

\begin{Definition}\label{def:expdim}
    The expected dimension of $\sigma_r(\mathrm{Sq}_{d,n})$ is 
    \begin{equation}\label{eq:edimsq}\mathrm{edim}(\sigma_r(\mathrm{Sq}_{d,n}))=r\binom{d/2+n}{n}-\binom{r}{2}.
    \end{equation}
    \end{Definition}
This notion coincides with the linear expected dimension defined in \cite[Definition 3.2]{BDP}. Note Definition \ref{def:expdim} differs from the standard expected dimension of a secant variety, where, in the subgeneric case, it is expected a unique decomposition and the term $\binom{r}{2}$ does not appear.
{Following \eqref{eq:edimsq} the expected generic $2$-rank is
\begin{equation}\label{eq:gen2rk}\min\left\{r\mid r\binom{d/2+n}{n}-\binom{r}{2}\ge \binom{d+n}{n}\right\},\end{equation} in agreement with \cite[Conjecture 1]{LSB} and \cite[Conjecture 1.2]{LORS}. In all cases that we have computed, the dimension of $\sigma_r(\mathrm{Sq}_{d,n})$ agrees with (\ref{eq:edimsq}) and the generic $2$-rank {agrees} with (\ref{eq:gen2rk}), see \Cref{prop: identM2} and \Cref{prop: dimM2}.

Moreover both (\ref{eq:edimsq}) and  (\ref{eq:gen2rk}) are realized in the case $d=2$, corresponding to the space $\sym^2\CC^{n+1}$ of symmetric matrices, that we identify as usual with quadratic forms, where the value in (\ref{eq:gen2rk}) is $n+1$. This case is traditionally considered defective for Waring decompositions, see \cite[Theorem 5.4.1.1]{LandsbergTensors}, \cite{BO07}, but in our $\mathrm{O}(r)$-setting it is no more defective, in the sense that the dimension of $\sigma_r(\mathrm{Sq}_{2,n})$ coincides with its expected dimension in (\ref{eq:edimsq})}. {Moreover, it is easy to check, essentially by the definition of $\mathrm{O}(r)$, that every $f\in\sigma_r(\mathrm{Sq}_{2,n})\setminus\sigma_{r-1}(\mathrm{Sq}_{2,n})$ is $\mathrm{O}(r)$-identifiable, in particular $\sigma_r(\mathrm{Sq}_{2,n})$ is generically $\mathrm{O}(r)$-identifiable}. 

{Indeed any $f\in\sigma_r(\mathrm{Sq}_{2,n})\setminus\sigma_{r-1}(\mathrm{Sq}_{2,n})$ is in the same $\mathrm{GL}(n+1)$-orbit of $\sum_{i=0}^{r-1}x_i^2$, and it is fixed by the subgroup $\mathrm O(n+1)\subset \mathrm {GL}(n+1)$, where the $\mathrm O(n+1)$-action on $\sum_{i=0}^{r-1}x_i^2$ coincides with the $\mathrm O(r)$-action in (\ref{eq:O(r)orbit}) by the embedding $\mathrm O(r)\subset \mathrm O(n+1)$ given by diagonal block matrix consisting of $\mathrm O(r)$ and the identity of size $n+1-r$. For other coordinate systems we get the same picture with a subgroup conjugate to the previous $\mathrm O(n+1)$. For $d\ge 4$ the picture is more complicated, indeed $r$ may be bigger than $n+1$ and $\mathrm O(r)$ cannot be embedded in $\mathrm O(n+1)$.} 

In \cite[Corollary 1.7]{FOST} it is shown that the dimension of $\sigma_r(\mathrm{Sq}_{d,n})$ is equal to the expected dimension for $r\leq n$. In particular, there exists a finite number of $\mathrm{O}(r)$-orbits of decomposition for a general polynomial of rank $r\le n$ \cite[Theorem 1.5]{FOST}. Moreover, for $r=2$ the orbit is unique \cite[Theorem 1.4]{FOST}. In this work, we give sufficient conditions for the $\mathrm O(r)$-identifiability utilizing the tangential contact locus, {by adapting a construction from \cite{CC}, which has been developed in \cite{COV14} and elsewhere}. {In particular, we show that the finitely many orbits of \cite[Theorem 1.5]{FOST} consist actually of a single orbit.} 

Next, we summarize the main results attained. In \Cref{sec: brk} we study {the case of binary forms.}

In \Cref{sec: ident} we give sufficient conditions for generic $\mathrm{O}(r)$-identifiability and we prove the following.

\begin{restatable*}{Theorem}{sufficientcondition}\label{thm:sufficientcondition}
     Let $f_i\in\sym^{d/2}\C^{n+1}$  be general. Let $H_1,\ldots, H_m$ be a basis of the space of hyperplanes vanishing on
$I_d=f_1\sym^{d/2}\C^{n+1}+\ldots+f_r\sym^{d/2}\C^{n+1}$.
In other words $\langle H_1,\ldots, H_m\rangle=I_d^\perp$ where $I=(f_1,\ldots, f_r)$.

Let $t_i$ be a basis of $\sym^{d/2}\C^{n+1}$, $N={{n+d/2}\choose n}$,
and
assume that the rank of the $N\times mN$ stacked Hessian matrix of scalars
$\frac{\partial H_1}{\partial t_i\partial t_j}|\ldots|\frac{\partial H_m}{\partial t_i\partial t_j}$ is equal to $N-r$. Then
 $\sigma_r(\mathrm{Sq}_{d,n})$ has dimension $rN-{r\choose 2}$ and it is generically $\mathrm O(r)$-identifiable.
\end{restatable*}

In the above statement, the perpendicular $\perp$ is considered, as usual, with respect to the apolar product (see \Cref{sec:dual}).
This leads to a criterion for specific $\mathrm{O}(r)$-identifiability, see \Cref{thm:sufficientconditionspecific}. Furthermore, we utilise \Cref{thm:sufficientconditionspecific} (2) to verify generic $\mathrm{O}(r)$-identifiability in Macaulay2 \cite{M2} for several cases, these are collected in \Cref{prop: identM2}.

\begin{restatable*}{Proposition}{Fermat}\label{prop:Fermat}
 Assume $n\ge 2$, $d\ge 2+\frac{4}{n-1}$, $r\le n+1$. Then
$\sigma_r(\mathrm{Sq}_{d,n})$ is generically $\mathrm O(r)$-identifiable.
\end{restatable*}

This shows the uniqueness of the orbit in \cite[Theorem 1.5]{FOST}. In particular, this allows us to conclude the generic $\mathrm{O}(r)$-identifiability of ternary forms of subgeneric rank (\Cref{cor: ternary}). In Corollary \ref{Cor: BOP2} we prove a sufficient condition for non defectivity which uses the work by Brambilla, Dumitrescu and Postinghel \cite{BDP}. In \S \ref{sec:dual} we describe the dual variety to $\sigma_{r}(\mathrm{Sq}_{d,n})$. In \S \ref{sec:computational} we report some computational results obtained with Macaulay2 \cite{M2}.

{ \subsection*{Acknowledgement}
We thank Elisa Postinghel who explained us how from \cite[Theorem 5.3]{BDP} it follows \Cref{Thm: BOP5.3}.
{{We thank Daniel Plaumann who found a mistake in the first version of this paper and kindly pointed out the paper \cite{Wall} by Wall, where ternary quartics of $2$-rank equal to $4$ are classified.}}
The first author is member of GNSAGA of INDAM and was partially supported by PRIN project "Multilinear Algebraic Geometry" of MUR. The second author was partially supported by the project Pure Mathematics in Norway, funded by Trond Mohn Foundation and Tromsø Research Foundation. We are grateful to the referee for a careful reading and the useful suggestions.}

\section{{Binary forms}}\label{sec: brk}

We give a precise description of the $\mathrm{O}(2)$-orbits of decompositions for generic binary forms of any degree $d$. 

\begin{Proposition}\label{prop: binary}
The number of $\mathrm{O}(2)$-orbits of decompositions of a general binary form of degree $d$
as a sum of two {squares} is equal to ${{d-1}\choose d/2}={\frac 12}{{d}\choose d/2}$.
\end{Proposition}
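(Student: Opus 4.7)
The plan is to reduce the counting of $\mathrm{O}(2)$-orbits of decompositions to a count of unordered factorizations of $f$, by exploiting the complex factorization $g^2+h^2=(g+ih)(g-ih)$. First, I would set $A=g+ih$ and $B=g-ih$, so that every decomposition $f=g^2+h^2$ produces a factorization $f=AB$ with $\deg A=\deg B=d/2$; conversely, given any such factorization, one recovers $g=(A+B)/2$ and $h=(A-B)/(2i)$. Since a general binary form of degree $d$ is not a square, the polynomials $g,h$ are linearly independent in any such decomposition, so neither $A$ nor $B$ is zero and the correspondence between pairs $(g,h)$ and ordered factorizations $(A,B)$ is bijective.

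Next, I would verify that the $\mathrm{O}(2)$-action in \eqref{eq:O(r)orbit} on $(g,h)$ corresponds precisely to the equivalence on factorizations generated by the rescalings $(A,B)\mapsto(\lambda A,\lambda^{-1}B)$ and the swap $(A,B)\mapsto(B,A)$. Writing a general rotation in $\mathrm{SO}(2)$ as $\left(\begin{smallmatrix}\alpha&\beta\\-\beta&\alpha\end{smallmatrix}\right)$ with $\alpha^2+\beta^2=1$, a short direct computation with $A=g+ih$, $B=g-ih$ gives $A\mapsto(\alpha-i\beta)A$ and $B\mapsto(\alpha+i\beta)B$, a scalar rescaling with $\lambda=\alpha-i\beta$ satisfying $\lambda\lambda^{-1}=\alpha^2+\beta^2=1$; an analogous computation for the reflection component of $\mathrm{O}(2)$ shows that it interchanges $A$ and $B$ up to such a scalar. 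Conversely, every such scaling (optionally followed by a swap) is realised by a unique element of $\mathrm{O}(2)$. Consequently, the $\mathrm{O}(2)$-orbits of sum-of-squares decompositions of $f$ are in bijection with unordered factorizations $\{A,B\}$ of $f$ into two degree-$d/2$ factors, taken modulo scalar.

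Finally, for a general binary form $f$ of degree $d$, its $d$ linear factors are pairwise non-proportional, so unordered equal-degree factorizations of $f$ up to scalar correspond bijectively to unordered bipartitions of the set of $d$ roots of $f$ into two blocks of size $d/2$. Counting these partitions gives $\tfrac12\binom{d}{d/2}=\binom{d-1}{d/2}$, as claimed. I expect no serious obstacle: the bulk of the argument is the explicit matching of the $\mathrm{O}(2)$-action with the natural scalar-and-swap equivalence on $(A,B)$, and the main point to watch is the bookkeeping of the scalar $\lambda$, so that rescalings on the $(A,B)$ side are neither double-counted nor missed when translated into $\mathrm{O}(2)$.
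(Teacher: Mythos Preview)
Your argument is correct and rests on the same identity $g^2+h^2=(g+ih)(g-ih)$ as the paper's proof; both reduce the problem to counting unordered bipartitions of the $d$ distinct roots of a general $f$ into two blocks of size $d/2$. The organizational difference is that the paper first constructs the decompositions from such bipartitions and then, by choosing a normal form in each $\mathrm{O}(2)$-orbit (forcing the first column of the coefficient matrix to be $(2,0)^T$), shows distinct bipartitions land in distinct orbits, whereas you set up the full bijection $(g,h)\leftrightarrow(A,B)$ at the outset and identify the $\mathrm{O}(2)$-action directly with the scalar-and-swap equivalence on factorizations. Your packaging has the small advantage of making it transparent that \emph{every} decomposition arises from a bipartition (the upper bound), a point the paper leaves implicit. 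One cosmetic remark: the sentence ``$\lambda\lambda^{-1}=\alpha^2+\beta^2=1$'' should read $(\alpha-i\beta)(\alpha+i\beta)=\alpha^2+\beta^2=1$, so that the second factor is indeed $\lambda^{-1}$.
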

\begin{proof}
    Let $f=\prod_{i=1}^d l_i$ and $A,B$ be complementary subsets of $\{1,\dots,d\}$ of cardinality $d/2$, then $$f=\left(\frac 12\left(\prod_{i\in A}l_{i}+\prod_{j\in B}l_{j}\right)\right)^2+\left(\frac i2\left(\prod_{i\in A}l_{i}-\prod_{j\in B}l_{j}\right)\right)^2.$$

There are ${{d}\choose d/2}$ choices for $A,B$, and we show that each choice leads to a different $\mathrm{O}(2)$-orbit. Up to  scalar multiplication, we may assume $f=\prod^d_{i=1}(x+a_iy)$, let $A=\{1,\dots,d/2\}, B=\{d/2+1,\dots,d\}$. Let $$C=\begin{bmatrix}
2&\sum a_i& \sum_{i< j\in A}a_ia_j+\sum_{i< j\in B}a_ia_j&\dots&\prod_{i\in A}a_i+\prod_{i\in B}a_i\\
0&\sum_{i\in A} a_i-\sum_{i\in B}a_i& \sum_{i< j\in A}a_ia_j-\sum_{i< j\in B}a_ia_j&\dots&\prod_{i\in A}a_i-\prod_{i\in B}a_i
\end{bmatrix}$$ be the matrix which has the coefficients of $\prod_{i\in A}(x+a_iy)+\prod_{i\in B}(x+a_iy)$ in the first row and the coefficients of $\prod_{i\in A}(x+a_iy)-\prod_{i\in B}(x+a_iy)$ in the second row. {Note that $\mathrm O(2)$ acts by left multiplication on $C$, so that} in each $\mathrm O(2)$-orbit we may find such a decomposition when the first column of $C$ is $\begin{bmatrix}
2\\0\end{bmatrix}$. Let $\sigma \in \mathfrak S_d$ be a permutation, $A'=\sigma A, B'=\sigma B$, and let $C'$ be the matrix of coefficients of this new decomposition. If these two choices of $A, B$, $A',B'$ are in the same $\mathrm O(2)$-orbit, there exists $M=\begin{bmatrix}
    \alpha&\beta\\
    \gamma&\delta
\end{bmatrix}\in O(2)$ such that $MC=C'$. Looking at the first column, we get $\alpha=1$, and since $\alpha^2+\beta^2=1$,  $\beta=0$. Moreover $\gamma=0$, this implies $\delta=\pm 1$. Note that $\delta=-1$ corresponds to swapping $A$ and $B$.\qedhere

\end{proof}

\Cref{prop: binary} is {essentially} known from \cite[Theorem 5]{FOS}. We just formulate it to make the orbit structure of the decomposition explicit and show they are all distinct.  \Cref{prop: binary} also gives examples of special forms of subgeneric $2$-rank $r$ that have more than one $\mathrm{O}(r)$-orbit of decompositions, it is enough to consider a binary form as a special ternary form.

\section{Identifiability of sum of squares}\label{sec: ident}

The description of tangent spaces to secant varieties is a classical result by Terracini proved in \cite{terracini}. {We restate it in the affine setting.}

\begin{Lemma}[Terracini's Lemma]\label{lemma:Terracini}
Let $X$ be the affine cone over a nondegenerate and irreducible projective variety. Consider generic points $x_1, \dots, x_r\in X$ and {$x = \sum_{i=1}^rx_i$}. Then $$
T_x\sigma_r(X)=\langle T_{x_1}X,\dots,T_{x_r}X \rangle.
$$

\end{Lemma}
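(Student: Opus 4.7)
The plan is to realize $\sigma_r(X)$ as the Zariski closure of the image of the summation morphism and then read off its tangent space from the differential. Concretely, with $X\subset \C^N$, I would introduce
$$\phi\colon X\times\cdots\times X\longrightarrow \C^N,\qquad (y_1,\ldots,y_r)\longmapsto y_1+\cdots+y_r,$$
so that by definition $\sigma_r(X)=\overline{\mathrm{Im}\,\phi}$, and $X^r$ is irreducible because $X$ is.

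The differential computation is immediate, since $\phi$ is the restriction to $X^r$ of a linear map on $\C^{rN}$: for tangent vectors $v_i\in T_{x_i}X$,
$$d\phi_{(x_1,\ldots,x_r)}(v_1,\ldots,v_r)=v_1+\cdots+v_r,$$
whence $\mathrm{Im}\bigl(d\phi_{(x_1,\ldots,x_r)}\bigr)=T_{x_1}X+\cdots+T_{x_r}X=\langle T_{x_1}X,\ldots,T_{x_r}X\rangle$. Since $X$ is a cone, each $T_{x_i}X$ already contains $x_i$, so this linear span contains $x=x_1+\cdots+x_r$, in accordance with $\sigma_r(X)$ being itself a cone.

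The only nontrivial step is promoting the equality $\mathrm{Im}(d\phi)=\langle T_{x_1}X,\ldots,T_{x_r}X\rangle$ to an equality with $T_x\sigma_r(X)$. Since we work in characteristic zero with $X^r$ irreducible, generic smoothness applies: on a Zariski-dense open subset $U\subset X^r$, the image $x=\phi(x_1,\ldots,x_r)$ is a smooth point of $\sigma_r(X)$ and the rank of $d\phi$ is locally constant, equal to $\dim\sigma_r(X)$. On $U$ the inclusion $\mathrm{Im}(d\phi)\subset T_x\sigma_r(X)$, which is automatic because $\phi$ factors through $\sigma_r(X)$, becomes an equality by dimension.

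I expect the main (albeit mild) obstacle to be precisely this passage to the closure: the map $\phi$ need not be dominant onto a closed subset of $\C^N$, and it may fail to be generically finite (in the secant-defective regime the fibres of $\phi$ have positive dimension, which is exactly the phenomenon responsible for the $\binom{r}{2}$ correction in \Cref{def:expdim}). However, for the identity of the tangent space this causes no issue: one only needs that the rank of $d\phi$ and the smooth locus of $\sigma_r(X)$ are controlled by standard upper semicontinuity, so that a single generic choice of $(x_1,\ldots,x_r)\in X^r$ witnesses both properties simultaneously.
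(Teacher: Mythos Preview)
Your argument is correct and is the standard proof of Terracini's Lemma via the summation map and generic smoothness in characteristic zero. The paper does not supply its own proof of this lemma: it is stated as a classical result and attributed to Terracini \cite{terracini}, so there is nothing to compare against beyond noting that your write-up matches the usual modern treatment.
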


{Note the tangent space to $\mathrm{Sq}_{d,n}$ at $f^2\in \mathrm{Sq}_{d,n}$ is 
$T_{f^2}\mathrm{Sq}_{d,n}=\left\{fg\mid g\in \sym^{d/2}\C^{n+1}\right\}$. The $r$-tangential contact locus was used in \cite{COV14} to give an identifiability criterion for tensor decompositions.
Here we use it to give a $\mathrm O(r)$-identifiability criterion for sums of squares, in \Cref{thm:sufficientcondition}
.}

\begin{Definition}
    Let $x_1,\dots,x_r\in X$ and $M=\langle T_{x_1}X,\dots,T_{x_r}X \rangle$. We define the $r$-tangential contact locus $\mathcal{C}_r(x_1,\dots,x_r)$ on $X$ by $$
    \mathcal{C}_r(x_1,\dots,x_r)=\{x\in X\ |\ T_xX\subset M \}.
    $$
\end{Definition}

\begin{Proposition}\label{prop:contactlinear} 
Let $f_1^2,\dots,f_r^2\in \mathrm{Sq}_{d,n}$.
\begin{enumerate}
\item{} The $r$-tangential locus
$\mathcal{C}_r(f_1^2,\dots,f_r^2)$ of $\mathrm{Sq}_{d,n}$ is a linear space in $\PP\sym^{d/2}\C^{n+1}\simeq \mathrm{Sq}_{d,n}$.

\item{} Let $N={{n+d/2}\choose n}$, $I=(f_1,\dots,f_r)$, $I_d=f_1\sym^{d/2}\C^{n+1}+\ldots+f_r\sym^{d/2}\C^{n+1}$, $\langle H_1,\ldots, H_m\rangle=I_d^\perp$ a basis of hyperplanes vanishing on $I_d$.
The dimension of $\mathcal{C}_r(f_1^2,\dots,f_r^2)$ is given by 
$N$ minus the rank of the $N\times mN$ stacked Hessian matrix of scalars
$\frac{\partial H_1}{\partial t_i\partial t_j}|\ldots|\frac{\partial H_m}{\partial t_i\partial t_j}$,
where $t_i$ is a basis of $\sym^{d/2}\C^{n+1}$.

\item{} Let $\{f_1,\ldots, f_r,s_{r+1},\ldots, s_N\}$ be a basis of $\sym^{d/2}\C^{n+1}$.
The dimension of $\mathcal{C}_r(f_1^2,\dots,f_r^2)$ is given by 
$N$ minus the rank of the $(N-r)\times m(N-r)$ stacked Hessian matrix
of scalars $\frac{\partial H_1}{\partial s_i\partial s_j}|\ldots|\frac{\partial H_m}{\partial s_i\partial s_j}$.
\end{enumerate}
\end{Proposition}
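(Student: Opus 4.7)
The strategy is to translate the tangential contact condition into a linear-algebraic condition via apolar duality.

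For part (1), I would first identify $M$. Since $T_{f^2}\mathrm{Sq}_{d,n}=f\cdot\sym^{d/2}\C^{n+1}$, linearity yields
\[M=\langle T_{f_1^2}\mathrm{Sq}_{d,n},\ldots,T_{f_r^2}\mathrm{Sq}_{d,n}\rangle=f_1\sym^{d/2}\C^{n+1}+\cdots+f_r\sym^{d/2}\C^{n+1}=I_d.\]
Hence $g^2$ belongs to $\mathcal{C}_r(f_1^2,\ldots,f_r^2)$ iff $g\cdot\sym^{d/2}\C^{n+1}\subset I_d$, a condition which is plainly linear in $g$. Its solution set is therefore a linear subspace of $\sym^{d/2}\C^{n+1}$, and its image under $[g]\mapsto[g^2]$ is the linear subspace asserted in (1).

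For part (2), I would dualize the inclusion $g\cdot\sym^{d/2}\C^{n+1}\subset I_d$ using the apolar pairing. It becomes $\langle H_l,gh\rangle=0$ for every $l$ and every $h\in\sym^{d/2}\C^{n+1}$; by the identity $\langle H_l,gh\rangle=\langle g(\partial)H_l,h\rangle$ and nondegeneracy of the apolar product on $\sym^{d/2}\C^{n+1}$, this is equivalent to $g(\partial)H_l=0$ for all $l$. Writing $g=\sum_i c_i t_i$ and pairing $g(\partial)H_l=0$ with each $t_j$ gives $\sum_i c_i\langle t_it_j,H_l\rangle=\sum_i c_i\,\partial^2 H_l/(\partial t_i\partial t_j)=0$ for every $j,l$. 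Thus the coefficient row $(c_1,\ldots,c_N)$ lies in the left kernel of the horizontally stacked $N\times mN$ Hessian matrix, so the linear subspace of admissible $g$ has dimension $N$ minus its rank.

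For part (3), I would exploit the automatic inclusion $\langle f_1,\ldots,f_r\rangle\subset\mathcal{C}_r$, which follows at once from $f_i\sym^{d/2}\C^{n+1}\subset I_d$. In the basis $\{f_1,\ldots,f_r,s_{r+1},\ldots,s_N\}$ the apolar pairings $\langle f_if_j,H_l\rangle$ and $\langle f_is_j,H_l\rangle$ vanish because $f_if_j,f_is_j\in I_d$ while $H_l\in I_d^\perp$. Consequently each catalecticant block acquires zero first $r$ rows and columns, so the rank of the full $N\times mN$ stacked matrix equals the rank of its $(N-r)\times m(N-r)$ sub-block indexed by $s_{r+1},\ldots,s_N$, and (2) supplies the dimension formula of (3).

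The main technical care, rather than a deep obstacle, is to keep the apolar conventions straight: reading $t_i$ simultaneously as a basis vector of $\sym^{d/2}\C^{n+1}$ and as a constant-coefficient differential operator of order $d/2$, and confirming that $\partial^2 H_l/(\partial t_i\partial t_j)$ returns a scalar equal to the apolar pairing $\langle t_it_j,H_l\rangle$, i.e.\ the $(i,j)$-entry of the middle catalecticant of $H_l$. Once this is fixed, all three assertions are formal consequences of the tangent-space description, Terracini's lemma, and the adjunction between multiplication and differentiation.
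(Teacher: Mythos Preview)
Your proposal is correct and follows essentially the same route as the paper: both identify the contact condition as $g\cdot\sym^{d/2}\C^{n+1}\subset I_d$, dualize it through the hyperplanes $H_l\in I_d^\perp$ to obtain a linear system in the coefficients of $g$ whose matrix is the stacked Hessian, and then observe that in the basis $\{f_1,\ldots,f_r,s_{r+1},\ldots,s_N\}$ the first $r$ rows of each Hessian block vanish because $H_l$ kills $f_i\sym^{d/2}\C^{n+1}$. Your write-up is somewhat more explicit about the apolar adjunction $\langle H_l,gh\rangle=\langle g(\partial)H_l,h\rangle$ than the paper's, but the argument is the same.
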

\begin{proof}
We have that $g\in \PP\sym^{d/2}\C^{n+1}$ belongs to $\mathcal{C}_r(f_1^2,\dots,f_r^2)$ if and only if
$g\sym^{d/2}\C^{n+1}$ is contained in $f_1\sym^{d/2}\C^{n+1}+\ldots+f_r\sym^{d/2}\C^{n+1}$.
Given $H_p$ and $t_i$ as in (2), this amounts to say that
$\frac{\partial H_p}{\partial t_i}(g)=0$ $\forall p=1,\ldots m$,
$\forall t_i$, which gives a linear system for the unknown $g$. {The matrix of this linear system with the $N$ coefficients of $g$ as unknowns is the stacked Hessian matrix appearing in (2).} This proves (1),
and considering the rank of the linear system proves also (2).
Item (3) is proved since each Hessian matrix $\frac{\partial H_p}{\partial t_i\partial t_j}$ in (2) , when computed with respect to the basis of item (3), is a symmetric matrix with the first $r$ rows vanishing, since $H_p$ vanishes on $f_i\sym^{d/2}\C^{n+1}$.
\end{proof}

\begin{Lemma}
   Let $f_i\in\sym^{d/2}\C^{n+1}$ for $i=1,\ldots, r$ and let $f=f_1^2+\ldots+f_r^2$ be a minimal decomposition of $f$. Then $f_i$ are linearly independent.
\end{Lemma}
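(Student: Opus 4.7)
The plan is to argue by contradiction. Suppose that $f_1,\dots,f_r$ are linearly dependent. After relabeling, we may assume $f_r = \sum_{i=1}^{r-1} c_i f_i$ for some scalars $c_i \in \C$. Substituting into the decomposition yields
\[
f = \sum_{i=1}^{r-1} f_i^2 + \Big(\sum_{i=1}^{r-1} c_i f_i\Big)^2 = F^T M\, F,
\]
where $F = (f_1,\dots,f_{r-1})^T$ and $M = I_{r-1} + cc^T$ with $c = (c_1,\dots,c_{r-1})^T$. The matrix $M$ is a complex symmetric $(r-1)\times(r-1)$ matrix, hence has rank $k \le r-1$.

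Next I would invoke the standard fact that any complex symmetric bilinear form of rank $k$ is congruent over $\C$ to $x_1^2 + \cdots + x_k^2$. Concretely, there exists an invertible matrix $P$ such that $P^T M P = \mathrm{diag}(1,\dots,1,0,\dots,0)$ with $k$ ones. Setting $G = P^{-1} F$, one obtains
\[
f = F^T M F = G^T \mathrm{diag}(1,\dots,1,0,\dots,0) G = \sum_{i=1}^{k} g_i^2,
\]
where $g_1,\dots,g_{r-1}$ are the components of $G$, each a linear combination of $f_1,\dots,f_{r-1}$, hence elements of $\sym^{d/2}\C^{n+1}$. Since $k \le r-1 < r$, this produces a decomposition of $f$ into strictly fewer than $r$ squares, contradicting the minimality of the original decomposition. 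Therefore $f_1,\dots,f_r$ must be linearly independent.

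The argument is essentially routine linear algebra; the only nonformal step is the congruence normal form for complex symmetric matrices, which is classical and available over any algebraically closed field of characteristic not $2$. I do not anticipate any real obstacle, but it is worth stressing that one must work over $\C$ to extract the square roots of the eigenvalues (equivalently, to put $M$ in the form $I_k \oplus 0$ by congruence), which is precisely the setting in which the paper operates.
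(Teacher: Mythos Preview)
Your proof is correct and follows the same approach as the paper: express $f$ as a quadratic form in the $r-1$ ``variables'' $f_1,\ldots,f_{r-1}$ and then diagonalize that form over $\C$ to produce a sum of at most $r-1$ squares, contradicting minimality. The paper compresses this into a single sentence, whereas you have spelled out the linear-algebra details (the matrix $M=I_{r-1}+cc^T$ and the congruence normal form), but the underlying argument is identical.
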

\begin{proof}
    Assume $f_r$ is a linear combination of $f_1,\ldots, f_{r-1}$. Then $f$ is a quadratic form in the variables $f_1,\ldots, f_{r-1}$ and can be written as a sum of at most $r-1$ squares of linear forms in these variables.\qedhere
\end{proof}

\begin{Proposition}\label{proporto} 
Let $N={d/2+n\choose n}$. Assume the tangent spaces at $f_1^2,\ldots, f_r^2$ have span of maximal possible affine dimension $rN-{r\choose 2}$. Then $f_if_j$ for $1\le i\le j\le r$ are linearly independent. In particular if $\sum_{i=1}^rf_i^2=\sum_{i=1}^r(\sum_{j=1}^rm_{ij}f_j)^2$,
then $(m_{ij})$ is an orthogonal matrix.
\end{Proposition}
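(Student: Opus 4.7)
The strategy is to reformulate the tangent-space hypothesis as a kernel computation via Terracini's Lemma, identify the $\binom{r}{2}$ obvious ``antisymmetric'' relations in that kernel, and then extract linear independence of the symmetric products $\{f_if_j\}$ by a parity matching between symmetric and antisymmetric parts.

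By Terracini's Lemma the affine span $\langle T_{f_1^2}\mathrm{Sq}_{d,n},\ldots,T_{f_r^2}\mathrm{Sq}_{d,n}\rangle$ coincides with the image of the multiplication map
\[
\mu\colon\bigoplus_{i=1}^r\sym^{d/2}\C^{n+1}\longrightarrow\sym^d\C^{n+1},\qquad (g_1,\ldots,g_r)\longmapsto\sum_{i=1}^rf_ig_i.
\]
Since the domain has dimension $rN$ and the image has dimension $rN-\binom{r}{2}$ by hypothesis, $\dim\ker\mu=\binom{r}{2}$. For each pair $i<j$ let $e_{ij}$ denote the tuple with $f_j$ in slot $i$, $-f_i$ in slot $j$, and $0$ elsewhere; evidently $e_{ij}\in\ker\mu$. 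Using the preceding lemma, $f_1,\ldots,f_r$ are linearly independent (any linear dependence among them would produce an $N$-dimensional family of extra kernel elements, violating the dimension count). This linear independence immediately forces the $\binom{r}{2}$ tuples $\{e_{ij}\}_{i<j}$ to be linearly independent, so they form a basis of $\ker\mu$.

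Now suppose $\sum_{i\le j}c_{ij}f_if_j=0$. Setting
\[
g_i:=c_{ii}f_i+\tfrac12\sum_{j\ne i}c_{\min(i,j),\max(i,j)}f_j,
\]
a direct expansion gives $\sum_if_ig_i=\sum_{i\le j}c_{ij}f_if_j=0$, hence $(g_1,\ldots,g_r)=\sum_{i<j}\lambda_{ij}e_{ij}$ for some scalars $\lambda_{ij}$. Reading off the $k$-th component and using linear independence of the $f_i$, the coefficient of $f_k$ vanishes on the right, forcing $c_{kk}=0$; for $k<m$, matching the coefficient of $f_m$ in slot $k$ gives $\tfrac12 c_{km}=\lambda_{km}$, while matching the coefficient of $f_k$ in slot $m$ gives $\tfrac12 c_{km}=-\lambda_{km}$, whence $c_{km}=0$. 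This proves the linear independence of $\{f_if_j\mid 1\le i\le j\le r\}$ in $\sym^d\C^{n+1}$.

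For the ``in particular'' clause, expanding
\[
\sum_{i=1}^r\Bigl(\sum_{j=1}^rm_{ij}f_j\Bigr)^2=\sum_j(M^TM)_{jj}f_j^2+2\sum_{j<k}(M^TM)_{jk}f_jf_k
\]
and equating coefficients against $\sum_if_i^2$ via the linear independence just established yields $M^TM=I$, i.e.\ $M\in\mathrm O(r)$. The main difficulty is the middle step: kernel elements are intrinsically antisymmetric in the pair $(i,j)$, whereas a relation among the $f_if_j$ is symmetric, so one must use two different coordinate positions of the resulting identity to annihilate both the diagonal coefficients $c_{kk}$ and the off-diagonal coefficients $c_{km}$ simultaneously.
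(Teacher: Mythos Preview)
Your proof is correct and takes a genuinely different route from the paper's. The paper argues by a one-line dimension count: extending $f_1,\ldots,f_r$ to a basis $\{f_1,\ldots,f_r,s_{r+1},\ldots,s_N\}$ of $\sym^{d/2}\C^{n+1}$ and setting $S=\langle s_{r+1},\ldots,s_N\rangle$, one has
\[
\sum_i T_{f_i^2}\mathrm{Sq}_{d,n} \;=\; \langle f_if_j\rangle_{i\le j} \;+\; \sum_i f_i S,
\]
and since the second summand has dimension at most $r(N-r)$, the first must have dimension at least $rN-\binom{r}{2}-r(N-r)=\binom{r+1}{2}$, forcing the $f_if_j$ to be independent. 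You instead compute $\ker\mu$ explicitly, show it is spanned by the Koszul syzygies $e_{ij}$, and then use the parity mismatch---the syzygies are antisymmetric in $(i,j)$ while a putative relation among the $f_if_j$ yields a symmetric kernel element---to annihilate all coefficients. The paper's argument is shorter; yours is more structural and makes transparent why the defect $\binom{r}{2}$ is exactly accounted for by the Koszul relations. One small quibble: your appeal to ``the preceding lemma'' for the independence of $f_1,\ldots,f_r$ is not quite right, since that lemma assumes a \emph{minimal} decomposition and no minimality is hypothesised here. Your parenthetical argument is the correct one, and can be sharpened as follows: a dependence among the $f_i$ would make the image of $\mu$ coincide with that of the analogous map for $r-1$ of the $f_i$'s, hence of dimension at most $(r-1)N-\binom{r-1}{2}<rN-\binom{r}{2}$ whenever $N\ge r$.
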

\begin{proof}
Let $\sym^{d/2}\C^{n+1}=\langle f_1,\ldots, f_r,s_{r+1},\ldots, s_{N}\rangle$.
Denote $S=\langle s_{r+1}\ldots, s_{N}\rangle$.
We have $T_{f_1^2}\mathrm{Sq}_{d,n}+\ldots+T_{f_r^2}\mathrm{Sq}_{d,n}=\langle f_if_j\rangle
    +f_1S+\ldots +f_rS$,
    so that $$rN-{r\choose 2}\le \dim\langle f_if_j\rangle+r\left(N-r\right)$$
hence $\dim\langle f_if_j\rangle\ge{{r+1}\choose 2 }$. The second claim is now straightforward.
\end{proof}

\def\niente{
\begin{Example}
Let $f=x_0^8+x_1^8+x_2^8$, then $\sym^4\CC^3=\langle x_0^4,x_1^4,x_2^4,R \rangle$, where $R=\{x^\alpha| \alpha_i<4 \text{ for every }i\}$. Then $R^2=\langle x^\alpha | \alpha_i<8 \text{ for every }i\rangle$, while $T_f\mathrm{Sq}_{8,3}=x_0^4\sym^4\CC^3+x_1^4\sym^4\CC^3+x_2^2\sym^4\CC^3=\langle x^\alpha | \alpha_i\geq 4 \text{ for at least one }i\rangle$. This means $\mathcal R=\langle x^\alpha | \alpha_i\leq 3 \text{ for every }i\rangle=\langle x^3y^3z^2,x^3y^2z^3,x^2y^3z^3 \rangle$. Therefore the system of equations \Cref{proptcl} is given by the $\sum_j d_jr_ir_j$ such that $r_ir_j\in\{x^3y^3z^2,x^3y^2z^3,x^2y^3z^3\}${\blue Why ?, we have $\sum_j d_jr_ir_j\in T_f\mathrm{Sq}_{8,3}$. So we have to contract the matrix $(r_ir_j)$ by $(T_f\mathrm{Sq}_{8,3})^\perp$. For example contract $(r_ir_j)$ by $x^3y^3z^2$, does it have maximal rank $12$? No, it has rank $10$, only contracting by linear combination $x^3y^3z^2+x^3y^2z^3+x^2y^3z^3$ we get $12$.}. Fixing $i$ gives an equation in the $d_j$'s such that $r_ir_j\in \{x^3y^3z^2,x^3y^2z^3,x^2y^3z^3\}$, and the opposite holds, $d_i$ will be a factor in the equation given by fixing each of the previous $j$'s, we get a square system, that is independent by \Cref{lem:indep}, thus $0$ is the only solution. This can be generalized to any degree and number of variables utilizing the Fermat form.
\end{Example}

\begin{Example}[General idea]
    {\blue To be written better if you agree that works.} Let $\sym^{d/2}\CC^{n+1}=\langle f_1,\dots,f_k,r_{k+1},\dots,r_N\rangle$ be a basis. Then $\mathcal R_d=\left(R^2/(T_f\mathrm{Sq}_{d,n}\cap R^2)\right)_d$ is a linear space of dimension $c=\binom{d+n}{n}+\binom{k}{2}-k\binom{d/2+n}{n}$, because $R^2+T_f\mathrm{Sq}_{d,n}=\sym^d\CC^{n+1}$, and $\dim T_f\mathrm{Sq}_{d,n}=k\binom{d/2+n}{n}-\binom{k}{2}$. Denote $\ell_1,\dots,\ell_c$ a basis of $\mathcal R$, then $[r_ir_j]=\sum_{t=1}^c\alpha_t^{(ij)}\ell_t$. Therefore the projection of $\sum_{j=k+1}^Nd_jr_jr_i$ to $\mathcal R/\mathcal{R}\cap (T_{f_1}+\dots+T_{f_k})$ is $$\sum_{t=1}^c\sum_{j=k+1}^Nd_j\alpha_t^{(ij)}\ell_t=0.$$
    Since $\ell_t$ are independent, we have $$
    \sum_{j=k+1}^N d_j\alpha_t^{(ij)}=0
    $$
    for every $i\in\{k+1,\dots,N\}$ and $t\in\{1,\dots,c\}$. Fixing $t$, this leads to a linear system on the $d_j's$ described by a symmetric $(N-(k+1))\times (N-(k+1))$-matrix $$
    M_t=(\alpha_t^{(ij)})_{ij}.
    $$
    We know that for each $i$, there exists at least one $j,t$ such that $\alpha_{t}^{i,j}=\alpha_{t}^{j,i}\neq 0$. Thus a general linear combination $\sum \beta_tM_t=M$ is a matrix with all entries non-zero. Moreover, $(\alpha_1^{i,j},\dots,\alpha_c^{i,j})\neq \lambda (\alpha_1^{s,t},\dots,\alpha_c^{s,t})$ if $(s,t)\neq (i,j),(j,i)$.

    Can we guarantee that a random linear combination of $M_T$ has maximal rank from this? Maybe some contradiction arrives from assuming this is not true, I cannot see it though.
\end{Example}

\begin{Lemma}\label{lem:indep}
Let $r_i$ be independent forms for $i=1,\ldots p$ in $\sym^{d/2}V$.
Then the $p\times p$ matrix $(r_ir_j)$ with entries in $\sym^{d}V$
has maximal rank $p$ when contracted by a general element of $\sym^{d}V^\vee$. 
\end{Lemma}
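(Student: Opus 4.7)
The plan is to exhibit a specific $\phi_0 \in \sym^d V^\vee$ making the contracted scalar matrix $(\phi_0(r_ir_j))_{i,j=1}^p$ nonsingular. Since the condition $\det(\phi(r_ir_j)) \neq 0$ is Zariski-open in the coefficients of $\phi$, producing one such $\phi_0$ already forces the same property for general $\phi$, giving the desired rank $p$.

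To build $\phi_0$, I would use powers of linear forms. For $L \in V^\vee$, a standard apolarity computation shows that $L^d$ acts on $\sym^d V$ as a nonzero scalar multiple of evaluation at $L$ (viewed as a point of $V^\vee$): there is a constant $c \neq 0$ with $L^d(h) = c\, h(L)$ for every $h \in \sym^d V$. In particular $L^d(fg) = c\, f(L)g(L)$ for $f,g \in \sym^{d/2} V$. Taking $\phi_0 = \sum_{k=1}^p L_k^d$ for linear forms $L_1, \dots, L_p \in V^\vee$, the contracted matrix becomes
\[
(\phi_0(r_ir_j))_{i,j} \;=\; c \cdot R R^T, \qquad R \;=\; (r_i(L_k))_{i,k=1}^p.
\]

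It then suffices to choose $L_1, \dots, L_p$ making the square matrix $R$ nonsingular, since then $\det(RR^T) = (\det R)^2 \neq 0$. Such points exist because the linear independence of $r_1, \dots, r_p$ in $\sym^{d/2} V$ forces the image of the evaluation map $V^\vee \to \C^p$, $L \mapsto (r_1(L), \dots, r_p(L))$, to linearly span $\C^p$ — otherwise a nontrivial combination $\sum_i a_i r_i$ would vanish as a polynomial function, hence as an element of $\sym^{d/2} V$. Picking $p$ points whose evaluation vectors form a basis of $\C^p$ completes the construction.

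The only delicate point, and the reason for keeping $R$ square rather than rectangular, is that over $\C$ the product $RR^T$ of a matrix with full row rank can itself be singular owing to isotropy; whereas for a square $R$ the identity $\det(RR^T) = (\det R)^2$ reduces nonsingularity of $RR^T$ to the already-established nonsingularity of $R$. The rest of the argument is bookkeeping.
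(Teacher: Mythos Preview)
Your argument is correct and genuinely different from the paper's. The paper puts the $r_i$ in reduced echelon form with respect to a monomial order, so that $r_i$ has leading monomial $x^{\alpha_i}$ and no $x^{\alpha_j}$ for $j\neq i$; it then contracts by the specific dual element $\sum_i c_i\,x^{2\alpha_i}$ and observes, using the multiplicativity of the order, that the monomial $c_1\cdots c_p$ survives in the determinant (since $x^{2\alpha_k}$ can appear in $r_ir_j$, for $i,j\ge k$, only when $i=j=k$). Your route instead contracts by a sum of $d$-th powers of linear forms, which turns the contracted matrix into $c\cdot RR^T$ with $R=(r_i(L_k))$, and then reduces the problem to finding $p$ points at which the evaluation vectors of $r_1,\ldots,r_p$ are independent---immediate from their linear independence as polynomials. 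Your approach is more conceptual and coordinate-free, and it exposes the Gram-type structure of the contracted matrix; the paper's monomial argument is more elementary in that it avoids the apolarity identity $L^d(h)=c\,h(L)$ and works entirely with coefficients. Your remark that keeping $R$ square is essential (because over $\C$ a full-rank rectangular $R$ can still give singular $RR^T$) is well placed and worth retaining.
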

\begin{proof}
Fix a monomial order.    We may assume $r_i$ in reduced form,
that is $LT(r_i)=x^{\alpha_i}$, $x^{\alpha_1}>x^{\alpha_2}>\ldots>x^{\alpha_p}$
and the coefficient of $x^{\alpha_i}$ vanishing in $r_j$ for $i\neq j$.
Contract the matrix $(r_ir_j)$ {\blue Which matrix? I do not understand also which determinant afterwards} with respect to $\sum_{i=1}^p c_i x^{2\alpha_i}$.
Then in the development of $\det (r_ir_j)$  there is a diagonal term
$c_1c_2\ldots c_p$ that cannot cancel since $c_1$ appears only in entry $(1,1)$,
in the complementary minor $c_2$ appears only in entry $(2,2)$ and so on.
So the determinant is a nonzero polynomial in $c_i$ which does not vanish for general $c_i$.
\end{proof}

\identifiability

\begin{proof}
We consider two decompositions in different $\mathrm{O}(r)$-orbits
$$ \sum_{i=1}^rf_i^2 =\sum_{i=1}^rg_i^2 $$

By Terracini's \Cref{lemma:Terracini}
$$T_{f_1}+\ldots+T_{f_r} = T_{g_1}+\ldots+T_{g_r}.$$
Thanks to \Cref{proporto} we may assume that $\langle f_1,\ldots, f_r\rangle\neq\langle g_1,\ldots, g_r\rangle$,
otherwise the two decompositions lie in the same orbit. 
From \Cref{proptcl} we get
$$ \{(\sum_{i=1}^r c_if_i)^2|c_i\in\CC\} = \{(\sum_{i=1}^r c_ig_i)^2|c_i\in\CC\},$$
hence
 $\langle f_1,\ldots, f_r\rangle = \langle g_1,\ldots, g_r\rangle$
 which is a contradiction.
\end{proof}
}

The following Lemma is well known and we omit its straightforward proof.

\begin{Lemma}\label{lem:isotropic}
    Let $i\in\{1,\ldots r\}$.
    The set $\{\sum_{j=1}^rm_{ij}f_j\mid(m_{ij})\in O(r)\}$
    is equal to 
    $\{\sum_{\ell=1}^r\lambda_\ell f_\ell\mid\sum_{\ell=1}^r\lambda_\ell^2=1\}$
    and its projectivization is dense in $\PP\langle f_1,\ldots , f_r\rangle$.
\end{Lemma}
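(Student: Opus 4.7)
The plan is to prove the set equality first and then deduce density as a simple consequence. Both halves reduce to standard facts about the complex orthogonal group $\mathrm{O}(r)$, which acts on $\mathbb{C}^r$ preserving the standard symmetric bilinear form $q(\lambda)=\sum_\ell \lambda_\ell^2$.

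For the forward inclusion, fix $(m_{ij})\in \mathrm{O}(r)$. The condition $MM^T = I$ says that every row of $M$ is a unit vector for $q$, so $\sum_{j=1}^r m_{ij}^2 = 1$. Thus setting $\lambda_\ell = m_{i\ell}$ realizes $\sum_j m_{ij}f_j$ as an element of the right-hand side. For the reverse inclusion, take $(\lambda_1,\ldots,\lambda_r)\in \mathbb{C}^r$ with $q(\lambda)=1$. Since $\lambda$ is non-isotropic, I would complete it to a $q$-orthonormal basis $\{v_1,\ldots,v_{i-1},\lambda,v_{i+1},\ldots,v_r\}$ of $\mathbb{C}^r$ by a complex Gram--Schmidt procedure, choosing each successive vector in the orthogonal complement of what has been built so far and rescaling by a square root of its $q$-norm (generic choices guarantee that each intermediate vector is non-isotropic, and isotropic obstructions can always be sidestepped by perturbation in the remaining orthogonal complement). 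Stacking these vectors as the rows of a matrix, with $\lambda$ as the $i$-th row, gives an element $(m_{ij})\in \mathrm{O}(r)$ whose $i$-th row is exactly $(\lambda_1,\ldots,\lambda_r)$.

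For the density assertion, I would use that $f_1,\ldots,f_r$ are linearly independent so that $\mathbb{P}\langle f_1,\ldots,f_r\rangle\cong \mathbb{P}^{r-1}$ with homogeneous coordinates $[\lambda_1:\cdots:\lambda_r]$. Given an arbitrary point $[\mu_1 f_1+\cdots+\mu_r f_r]$, set $c = q(\mu) = \sum_\ell \mu_\ell^2$; if $c\neq 0$, choosing any square root $\sqrt{c}$ and putting $\lambda_\ell = \mu_\ell/\sqrt{c}$ gives $q(\lambda)=1$ and the same projective class. Therefore the projectivization of $\{\sum_\ell \lambda_\ell f_\ell \mid q(\lambda)=1\}$ equals the complement in $\mathbb{P}\langle f_1,\ldots,f_r\rangle$ of the projective quadric $\{q = 0\}$, which is a proper closed subset, so the complement is a nonempty Zariski open set and thus dense.

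The only mild technicality is the complex Gram--Schmidt step, since over $\mathbb{C}$ one must ensure that isotropic vectors never appear as denominators; this is handled by picking the extension vectors generically in the $q$-orthogonal complement at each stage, which is possible because the quadric $\{q=0\}$ is a proper subvariety of each such complement. No other step presents any difficulty.
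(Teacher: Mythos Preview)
The paper omits the proof entirely, calling the lemma ``well known'' with a ``straightforward'' proof, so there is nothing to compare against; your argument is correct and is exactly the kind of verification the authors had in mind.

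Two minor remarks. First, your Gram--Schmidt step can be replaced by a cleaner one-liner: since $q(\lambda)=1\neq 0$, the restriction of $q$ to $\lambda^{\perp}$ is non-degenerate, and over $\mathbb{C}$ any non-degenerate symmetric form admits an orthonormal basis; adjoining $\lambda$ gives the desired orthogonal matrix with $\lambda$ as its $i$-th row. This avoids the perturbation discussion. Equivalently, transitivity of $\mathrm{O}(r,\mathbb{C})$ on $\{q=1\}$ (Witt extension, or a single reflection) gives the result immediately. Second, for the density assertion you invoke linear independence of $f_1,\ldots,f_r$; this is not stated in the lemma itself but is guaranteed in the paper's context by the immediately preceding lemma on minimal decompositions, so the assumption is harmless.
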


We now establish the connection between identifiability and the tangential contact locus.

\begin{Proposition}\label{prop: nonidentifiability} 
    Suppose $\sigma_r(\mathrm{Sq}_{d,n})$ is not generically $\mathrm{O}(r)$-identifiable, $f_1^2,\dots,f_r^2\in \mathrm{Sq}_{d,n}$ are generic and $\langle T_{f_1^2}\mathrm{Sq}_{d,n},\dots,T_{f_r^2}\mathrm{Sq}_{d,n}\rangle$ has dimension $r\binom{n+d/2}{d/2}-\binom{r}{2}$ , then the tangential contact locus $\mathcal C_r(f_1^2,\dots,f_r^2)$ contains a variety of affine dimension $\geq r+1$.
\end{Proposition}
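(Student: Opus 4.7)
The plan is to use the hypothesis of non-identifiability to produce a second decomposition, apply Terracini to trap both decompositions inside the contact locus, and then rule out the possibility that the two linear spans coincide.

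First, since $\sigma_r(\mathrm{Sq}_{d,n})$ is not generically $\mathrm O(r)$-identifiable and $f=\sum_{i=1}^r f_i^2$ is general, there exists a decomposition $f=\sum_{i=1}^r g_i^2$ lying in a different $\mathrm O(r)$-orbit from $(f_1,\dots,f_r)$. By Terracini's \Cref{lemma:Terracini} applied to both decompositions,
\[
M:=\langle T_{f_1^2}\mathrm{Sq}_{d,n},\dots,T_{f_r^2}\mathrm{Sq}_{d,n}\rangle \;=\; T_f\sigma_r(\mathrm{Sq}_{d,n}) \;=\; \langle T_{g_1^2}\mathrm{Sq}_{d,n},\dots,T_{g_r^2}\mathrm{Sq}_{d,n}\rangle.
\]
In particular $T_{g_i^2}\mathrm{Sq}_{d,n}\subset M$ for every $i$, so each $g_i^2$ lies in the tangential contact locus $\mathcal C_r(f_1^2,\dots,f_r^2)$; of course each $f_i^2$ does as well.

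Next I would invoke \Cref{prop:contactlinear}(1), which says that $\mathcal C_r(f_1^2,\dots,f_r^2)$ is (via the square embedding $sq$) a linear subspace $L\subset\PP\sym^{d/2}\C^{n+1}$. Thus both $[f_1],\dots,[f_r]$ and $[g_1],\dots,[g_r]$ lie in $L$. By the lemma preceding \Cref{proporto}, minimality of the two decompositions forces both $\{f_1,\dots,f_r\}$ and $\{g_1,\dots,g_r\}$ to be linearly independent, so each of the projective spans $\PP\langle f_1,\dots,f_r\rangle$ and $\PP\langle g_1,\dots,g_r\rangle$ has projective dimension $r-1$ and sits inside $L$.

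The decisive step is to show these two spans cannot coincide. Suppose $\langle g_1,\dots,g_r\rangle=\langle f_1,\dots,f_r\rangle$. Then $g_i=\sum_j m_{ij}f_j$ for some matrix $(m_{ij})$, and the identity $\sum_i g_i^2=\sum_i f_i^2$ together with the hypothesis that the tangent spans have the maximal affine dimension $rN-\binom{r}{2}$ (so that $f_if_j$ are linearly independent by \Cref{proporto}) forces $(m_{ij})\in\mathrm O(r)$. This contradicts the assumption that the two decompositions lie in distinct $\mathrm O(r)$-orbits. Therefore $\langle f_1,\dots,f_r,g_1,\dots,g_r\rangle$ has affine dimension at least $r+1$, and since this is contained in the affine cone over $L$, the affine cone of $\mathcal C_r(f_1^2,\dots,f_r^2)$ (equivalently, $sq$ applied to this linear subspace of $\PP\sym^{d/2}\C^{n+1}$) contains a variety of affine dimension $\geq r+1$, as required.

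The main obstacle I expect is the coincidence-of-spans step: the argument hinges on using the maximal-tangent-dimension hypothesis to invoke \Cref{proporto} and conclude orthogonality of the change-of-basis, which is exactly what prevents the two distinct orbits from living in the same $r$-dimensional linear space. Everything else is a direct application of Terracini and the linearity of the contact locus.
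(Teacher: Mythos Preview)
Your proof is correct and follows essentially the same route as the paper's: produce a second decomposition $(g_1,\dots,g_r)$ in a different orbit, use Terracini to place each $g_i$ in the contact locus, and invoke \Cref{proporto} to rule out $\langle g_1,\dots,g_r\rangle=\langle f_1,\dots,f_r\rangle$, forcing the (linear) contact locus to have affine dimension at least $r+1$. One small overstatement: you assert the equality $T_f\sigma_r(\mathrm{Sq}_{d,n})=\langle T_{g_1^2}\mathrm{Sq}_{d,n},\dots,T_{g_r^2}\mathrm{Sq}_{d,n}\rangle$, but the $g_i$ need not be generic; only the containment $T_{g_i^2}\mathrm{Sq}_{d,n}\subset T_f\sigma_r(\mathrm{Sq}_{d,n})=M$ is needed, and that holds for any decomposition at a smooth point (the ``easy'' direction of Terracini), which is exactly what the paper uses as well.
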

\begin{proof}

    Suppose $f=\sum_{i=1}^r (\sum_{j=1}^rm_{ij}f_j)^2=\sum_{i=1}^r (\sum_{j=1}^r n_{ij}g_j)^2$ are two different orbits of decomposition for $f$, where $(m_{ij}),(n_{ij})\in \mathrm{O}(r)$.
    Since the orbits are different there is at least one $g_i\notin\langle f_1,\dots,f_r\rangle$. Indeed, if for every $i=1,\dots,r$ we would have $g_i\in\langle f_1,\dots,f_r\rangle$, then $g_i^2=(\sum_{j=1}^r\alpha_{ij}f_j)^2$, thus $$
    \sum_{i=1}^r g_i^2=\sum_{i=1}^r (\sum_{j=1}^r \alpha_{ij}f_j)^2=\sum_{i=1}^r f_i^2.
    $$
    If $\sigma_r(\mathrm{Sq}_{d,n})$ has the expected dimension $r\binom{n+d/2}{d/2}-\binom{r}{2}$ then the tangent spaces at $f_i^2$ have span of maximal dimension. Therefore we satisfy the assumptions of \Cref{proporto}, and it follows that $(\alpha_{ij})\in\mathrm{O}(r)$, thus both decompositions lies in the same orbit, a contradiction. This means that the tangential contact locus of $f_1^2,\ldots, f_r^2$, which by \Cref{prop:contactlinear} is a linear space containing { $\langle f_1,\dots,f_r\rangle$, contains also $g_i\notin \langle f_1,\dots,f_r\rangle $ by Terracini Lemma, then its dimension is $\ge r+1$.}
\end{proof}

We are ready to prove our criterion for generic identifiability.

\sufficientcondition

\begin{proof}
    The assumptions imply that 
    $\mathcal{C}_r(f_1^2,\dots,f_r^2)$ for general $f_i$ is a linear space of affine dimension $r$
    by \Cref{prop:contactlinear} (2).
    Then the result follows from the contrapositive of \Cref{prop: nonidentifiability}.
\end{proof}

The following is a sufficient criterion for $\mathrm O(r)$-identifiability of a \emph{specific} polynomial $f$ .

\begin{Theorem}\label{thm:sufficientconditionspecific}
    Let $N={d/2+n\choose n}$, $f\in\sym^d\C^{n+1}$, \begin{equation}
    \label{eq:fsosr}f=\sum_{i=1}^r f_i^2,\end{equation} 
    assume it is a smooth point of $\sigma_r(\mathrm{Sq}_{d,n})$, which we assume to have dimension equal to the expected dimension $rN-\binom{r}{2}$.
    Let $I=(f_1,\dots,f_r)$, $\langle H_1,\ldots, H_m\rangle=I_d^{\perp}$ be a basis of hyperplanes vanishing on
$I_d=f_1\sym^{d/2}\C^{n+1}+\ldots+f_r\sym^{d/2}\C^{n+1}$. Assume that $\dim I_d= rN-\binom{r}{2}$ and let $t_i$ be a basis of $\sym^{d/2}\C^{n+1}$.
\begin{enumerate}
\item{}
Assume that the rank of the $N\times mN$ stacked Hessian matrix of scalars
$\frac{\partial H_1}{\partial t_i\partial t_j}|\ldots|\frac{\partial H_m}{\partial t_i\partial t_j}$ is equal to $N-r$. Then $f$ is $\mathrm O(r)$-identifiable, in other words, all decomposition of $f$ as a sum of $r$ squares are in the same $\mathrm O(r)$-orbit of 
(\ref{eq:fsosr}).

\item{} Let $\{f_1,\ldots, f_r,s_{r+1},\ldots, s_N\}$ be a basis of $\sym^{d/2}\C^{n+1}$.
Assume that the rank of the $(N-r)\times m(N-r)$ stacked Hessian matrix
of scalars $\frac{\partial H_1}{\partial s_i\partial s_j}|\ldots|\frac{\partial H_m}{\partial s_i\partial s_j}$ is maximum, so equal to $N-r$. Then
$f$ is $\mathrm O(r)$-identifiable, in other words all decomposition of $f$ as sum of $r$ squares are in the same $\mathrm O(r)$-orbit of 
(\ref{eq:fsosr}).
\end{enumerate}
\end{Theorem}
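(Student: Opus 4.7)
The plan is to mirror the proof of \Cref{thm:sufficientcondition} at a fixed (smooth) point $f$ rather than a generic one. First I would argue by contradiction: suppose $f$ admits a second decomposition $f=\sum_{i=1}^rg_i^2$ not lying in the $\mathrm O(r)$-orbit of $(f_1,\ldots,f_r)$. Because $f$ is a smooth point of $\sigma_r(\mathrm{Sq}_{d,n})$ and this variety attains its expected dimension $rN-\binom{r}{2}$, \Cref{lemma:Terracini} (valid at any smooth point) gives $\sum_{i=1}^rT_{g_i^2}\mathrm{Sq}_{d,n}\subseteq T_f\sigma_r(\mathrm{Sq}_{d,n})$; the hypothesis $\dim I_d=rN-\binom{r}{2}$ then forces $T_f\sigma_r(\mathrm{Sq}_{d,n})=I_d=\sum_{i=1}^rT_{f_i^2}\mathrm{Sq}_{d,n}$, so each $g_i^2$ lies in the tangential contact locus $\mathcal{C}_r(f_1^2,\ldots,f_r^2)$.

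Next, I would invoke \Cref{prop:contactlinear}(2): the Hessian rank hypothesis $N-r$ says $\dim \mathcal{C}_r(f_1^2,\ldots,f_r^2)=r$. Since $\mathcal{C}_r$ is a linear subspace of $\mathbb P\sym^{d/2}\C^{n+1}$ containing every $\sum_j c_jf_j$ (because $(\sum_jc_jf_j)\sym^{d/2}\C^{n+1}\subseteq I_d$ trivially), and since the $f_j$ are linearly independent (this is forced by $\dim I_d=rN-\binom{r}{2}$, or equivalently follows from \Cref{proporto}), a dimension count yields $\mathcal{C}_r=\langle f_1,\ldots,f_r\rangle$. In particular, each $g_i$ can be written $g_i=\sum_jm_{ij}f_j$ for some matrix $M=(m_{ij})\in M_r(\C)$.

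From here the conclusion follows by a direct linear-algebra matching. Substituting into $\sum_ig_i^2=\sum_if_i^2$ gives $\sum_{j,k}(M^TM)_{jk}f_jf_k=\sum_k f_k^2$; \Cref{proporto} ensures linear independence of the products $\{f_jf_k\}_{1\le j\le k\le r}$, so matching coefficients yields $M^TM=I$, i.e., $M\in\mathrm O(r)$. This places the two decompositions in the same $\mathrm O(r)$-orbit, contradicting the assumption, and proves (1). For (2) the argument is identical after replacing \Cref{prop:contactlinear}(2) by \Cref{prop:contactlinear}(3): when the Hessians are written in the basis $\{f_1,\ldots,f_r,s_{r+1},\ldots,s_N\}$ the first $r$ rows vanish, so the maximal-rank hypothesis $N-r$ again pins down $\dim\mathcal{C}_r=r$ and the rest of the proof is unchanged.

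The main obstacle I anticipate is the first step: making sure that Terracini's Lemma — stated for generic points — transfers faithfully to our specific smooth $f$, and that the combined smoothness, expected-dimension, and $\dim I_d$ hypotheses are strong enough to force the identification $T_f\sigma_r(\mathrm{Sq}_{d,n})=I_d$ rather than a mere inclusion. Once this identification is in hand, the geometry reduces to the structure of the contact locus as a linear space computed in \Cref{prop:contactlinear}, and orthogonality of $M$ is an immediate consequence of \Cref{proporto}.
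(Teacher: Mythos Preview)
Your argument is correct and in fact more direct than the paper's. Both proofs begin the same way: use \Cref{prop:contactlinear} to deduce that the Hessian hypothesis forces $\dim\mathcal C_r(f_1^2,\ldots,f_r^2)=r$, and use the differential of $\psi$ together with smoothness of $f$ and the assumption $\dim I_d=rN-\binom{r}{2}$ to identify $T_f\sigma_r(\mathrm{Sq}_{d,n})=I_d$. From there the two proofs diverge.

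You apply the containment $\mathrm{im}\,d\psi_{(g_1,\ldots,g_r)}\subseteq T_f\sigma_r(\mathrm{Sq}_{d,n})$ a \emph{second} time, now to the rival decomposition $(g_i)$. This gives $g_i\sym^{d/2}\C^{n+1}\subseteq I_d$ for every $i$, i.e.\ $g_i\in\mathcal C_r(f_1^2,\ldots,f_r^2)=\langle f_1,\ldots,f_r\rangle$, and \Cref{proporto} finishes. The paper does not take this step; instead it passes to the abstract secant variety $\mathcal A(\sigma_r(\mathrm{Sq}_{d,n}^\PP))$ with its projection $\pi$, argues that the derivative of $\pi$ at $([f],([f_1^2],\ldots,[f_r^2]))$ drops rank by exactly $\binom{r}{2}$, so the connected component of $\pi^{-1}([f])$ through that point is precisely the $\mathrm O(r)$-orbit, and then invokes the Zariski Connectedness Theorem (using that $[f]$ is smooth and $\pi$ is proper) to rule out a second component containing $([f],([h_1^2],\ldots,[h_r^2]))$. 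This follows the template of \cite[Prop.~5.1]{COV}.

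What each approach buys: yours is self-contained within the tangential-contact-locus framework already set up in \S\ref{sec: ident}, and avoids the abstract secant construction and the connectedness theorem entirely. The paper's route, while heavier, is the standard machinery for specific identifiability coming from the tensor literature and would transfer verbatim to settings where the contact locus is not linear and a direct membership argument like yours is unavailable. Your concern in the last paragraph is well placed but resolvable: the containment you need is exactly what the paper itself calls ``Terracini's Lemma and its proof'' --- it is the image of $d\psi$, valid at every point, landing in the Zariski tangent space, which at a smooth point agrees with the honest tangent space of the correct dimension.
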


\begin{proof}
    The assumptions (1) (respectively (2)) imply that 
    $\mathcal{C}_r(f_1^2,\dots,f_r^2)$ is a linear space of affine dimension $r$
    by \Cref{prop:contactlinear} (2) (respectively (3)).
    Then the result follows from \Cref{prop: nonidentifiability} and a modification of the arguments in the proof in \cite[Prop. 5.1]{COV}, as follows.

     Let $f=\sum_{i=1}^r f_i^2$ be a smooth point of $\sigma_r(\mathrm{Sq}_{d,n})$ and $\dim (\mathcal C_r(f_1^2,\dots,f_r^2))=r$. Then there exists an open neighbourhood of $f$ where every point is smooth, its contact locus is $r$-dimensional and its tangent space is described as in Terracini's Lemma. In particular, this implies $\sigma_r(\mathrm{Sq}_{d,n})$ is generically $\mathrm O(r)$-identifiable by \Cref{thm:sufficientcondition}.

    Let $$\mathcal A(\sigma_r(\mathrm{Sq}_{d,n}^\PP))=\left\{\left([g],\left([g_1^2],\dots,[g_r^2]\right)\right)\mid g\in\langle g_1^2,\dots,g_r^2\rangle \right\}\subset\PP\sym^d\CC^{n+1}\times \prod_{i=1}^r\mathrm{Sq}_{d,n}^\PP$$
    be the abstract $r$-secant variety of squares, and $\pi:\mathcal A(\sigma_r(\mathrm{Sq}_{d,n}^\PP))\to \PP\sym^d\CC^{n+1}$ the projection to the first factor. Notice $\pi(\mathcal A(\sigma_r(\mathrm{Sq}_{d,n}^\PP)))=\sigma_r(\mathrm{Sq}_{d,n}^\PP)$ and the generic fibre consists of the unique $\mathrm O(r)$-orbit of decompositions, in particular it has dimension $r\choose 2$.

    Assume $\pi^{-1}([f])$ contains two different $\mathrm O(r)$-orbits of decompositions, i.e., there exist two points $\left([f],\left([f_1^2],\dots,[f_r^2]\right)\right)$, $\left([f],\left([h_1^2],\dots,[h_r^2]\right)\right)\in \pi^{-1}([f])$ such that $h_i\notin\langle f_1,\dots,f_r\rangle$ for at least one index $i$, {otherwise the orbit would be the same as explained previously in the proof of \Cref{prop: nonidentifiability}}. Terracini's Lemma and its proof imply that the {tangent space at $f=\sum_{i=1}^r f_i^2$
    of $\sigma_r(\mathrm{Sq}_{d,n})$ contains the span of    $T_{f_i^2}\mathrm{Sq}_{d,n}$ for $i=1,\ldots, r$.
    This span is exactly the space $I_d$ in the statement. Since the dimension of $I_d$ coincides by assumption with the dimension of $\sigma_r(\mathrm{Sq}_{d,n})$, which is smooth at $f$,  we have that $I_d$ coincides with the tangent space at $f$.
    It follows that the derivative of $\pi$ 
    (whose Jacobian matrix is sometimes called the Terracini matrix) drops rank at $([f],([f_1^2],\dots,[f_r^2]))$ exactly by $r\choose 2$,
hence the connected component of the fibre containing $([f],([f_1^2],\dots,[f_r^2]))$ cannot have dimension larger than $r\choose 2$ and it coincides with the $\mathrm O(r)$-orbit.} The $\mathrm O(r)$-orbit containing $\left([f],\left([h_1^2],\dots,[h_r^2]\right)\right)$ must lie in another connected component of $ \pi^{-1}([f])$.

    However $[f]$ is a smooth point of $\sigma_r(\mathrm{Sq}_{d,n}^\PP)$ and $\pi$ is a surjective proper morphism, thus Zariski Connectedness Theorem implies that $\pi^{-1}([f])$ is connected, contradicting the previous paragraph. Therefore, $f$ is $\mathrm{O}(r)$-identifiable.\qedhere
    
\end{proof}

\Fermat

\begin{proof}
The first inequality in the assumption 
is equivalent to $(n+1)(d/2-1)\ge d$, so under this assumption we have a monomial $x^{\alpha}$  of degree $d$ with $\alpha_i\le (d/2-1)$ for $i=0,\ldots, n$.
{We may assume $r=n+1$.} 

Our strategy is the following. We will show that $\mathcal C(x_0^d,\dots,x_n^d)=\{(\sum_{i=0}^{n}c_ix_i^{d/2})^2\mid c_i\in \CC\}$, so in particular it has dimension $n+1$. {Then we may apply Proposition \ref{prop: nonidentifiability} to conclude identifiability. }

Let $N=\binom{n+d/2}{n}$, 
and denote $R=\{x^\alpha\mid |\alpha|=\frac{d}{2}, \alpha_i<\frac{d}{2}\ \forall \ i\in\{0,\dots,n\}\}=\{r_{n+2},\dots,r_N\},$ {so that $\dim R=N-(n+1)$. 
Consider 
$$\sum_{i=0}^n T_{x_i^d}\mathrm{Sq}_{d,n}=\sum_{0\leq i\leq j\leq n}\langle x_i^{d/2}x_j^{d/2}\rangle+\sum_{i=0}^n x_i^{d/2}R,$$
hence we have $\dim \sum_{i=0}^n T_{x_i^d}\mathrm{Sq}_{d,n} ={{n+2}\choose 2}+(n+1)\left(N-(n+1)\right)= (n+1)N-{{n+1}\choose 2}$, in agreement with the assumptions of \Cref{prop: nonidentifiability}.}
 
{
Moreover $R^2 + \sum_{i=0}^n T_{x_i^d}\mathrm{Sq}_{d,n}=S^d\CC^{n+1}${, in the following we will analyze in detail this sum, which, in particular, is not a direct sum}. Let $$\mathcal R= (R^2 + \sum_{i=0}^n T_{x_i^d}\mathrm{Sq}_{d,n})/\sum_{i=0}^n T_{x_i^d}\mathrm{Sq}_{d,n}\simeq R^2/(R^2\cap\sum_{i=0}^n T_{x_i^d}\mathrm{Sq}_{d,n}).$$

It follows $c:=\dim(\mathcal R)=\binom{n+d}{n}+\binom{n+1}{2}-N(n+1)$, and let $\{[x^\beta]\mid |\beta|=d,\beta_i<\frac{d}{2}\}=\{s_1,\dots,s_c\}$ be a basis of $\mathcal R$, {seen as representatives of lateral classes in $S^d\CC^{n+1}$ modulo $\sum_{i=0}^n T_{x_i^d}\mathrm{Sq}_{d,n}$. }

{We show now that $\mathcal C(x_0^d,\dots,x_n^d)=\{(\sum_{i=0}^{n}c_ix_i^{d/2})^2\mid c_i\in \CC\}$.} Suppose that $(\sum_{i=0}^nc_ix_i^{d/2}+\sum_{j=n+2}^N d_jr_j)^2\in \mathcal C(x_0^d,\dots,x_n^d)$, $c_i,d_j\in \CC$, we have to prove that $d_j=0$ for $j=n+2,\ldots, N$. From the definition of tangential contact locus we have that {$(\sum_{i=0}^nc_ix_i^{d/2}+\sum_{j=n+2}^N d_jr_j)r_\ell\in \sum_{i=0}^n T_{x_i^d}\mathrm{Sq}_{d,n}
$.} It follows that its projection to $\mathcal R$ vanishes, so $[\sum_{j=n+2}^N d_jr_jr_\ell]=0\in \mathcal R$ for every $\ell=n+2,\dots, N$. 

Since the $r_j=x^{\alpha}$ are the elements of the monomial basis such that $\alpha_i<\frac{d}{2}$ for every $i=0,\dots,n$, we have $r_ir_\ell=r_jr_\ell$ if and only if $i=j$, so if $[r_ir_\ell]\neq0$, then $[r_ir_\ell]=[r_jr_\ell]$ if and only if $i=j$. Moreover, from the definition of the $s_i$, we have that $s_i=[r_kr_\ell]$ for some $k$ and $\ell$ . Therefore for each $\ell=n+2,\dots, N$ {there exists a subset $J_\ell\subset
\{n+2,\dots,N\}$ such that} we have $$
\left[\sum^N_{j=n+2}d_jr_jr_\ell\right]=\sum_{j\in J_\ell \subset\{n+2,\dots,N\}} d_j[r_jr_\ell]=0,
$$
where $[r_jr_\ell]\neq 0$ for $j\in J_\ell$.
Since $\{[r_jr_\ell]\mid j\in J_\ell\}\subset\{s_1,\dots,s_c\}$ and $[r_jr_\ell]\neq[r_ir_\ell]$ for $i\neq j\in J_\ell$, it follows that it is a set of linearly independent vectors in $\mathcal R$, so $d_j=0$ for all $j\in J_\ell$. Moreover, notice that for each fixed $j\in\{n+2,\dots,N\}$, there exists at least one $\ell\in \{n+2,\dots,N\}$ such that $[r_jr_\ell]\neq0$, therefore $d_j=0$ for all $j\in\{n+2,\dots,N\}$.
{This concludes the proof that $\mathcal C(x_0^d,\dots,x_n^d)=\{(\sum_{i=0}^{n}c_ix_i^{d/2})^2\}$. It follows by semicontinuity that $\dim (\mathcal C(f_1^2,\dots,f_n^2))=r$ for generic $f_1,\dots,f_r$
and applying Proposition \ref{prop: nonidentifiability} we get the desired $\mathrm O(r)$-identifiability.}
}\end{proof}

\begin{Remark}
The technical condition {$d\ge 2+\frac{4}{n-1}$} is satisfied for all $n\geq 3$ and $d\geq4$. For $n=2$ it is not satisfied only for $d=4$, in which case the generic rank is $3=n+1$. However, in such case it is known by \cite[Theorem 1.4]{FOST} that the $2$-secant is generically identifiable, that is the only non-trivial subgeneric case. For $d=2$, $\sigma_n(\mathrm{Sq}_{2,n})$ is the determinant hypersurface and $\sigma_{n+1}(\mathrm{Sq}_{2,n})\setminus \sigma_{n}(\mathrm{Sq}_{2,n})$ consists of all symmetric matrices of maximal rank, every element in this $\mathrm{SO}(n+1)$-orbit  is easily seen to be $\mathrm{SO}(n+1)$-identifiable by the definition of orthogonal group. In the same way, every element 
of $\sigma_{j}(\mathrm{Sq}_{2,n})\setminus \sigma_{j-1}(\mathrm{Sq}_{2,n})$ is a symmetric matrix of rank $j$ and it is $\mathrm{SO}(j)$-identifiable; here the contact locus can be identified with the span of the columns of the matrix.
\end{Remark}

\begin{Corollary}\label{cor: ternary}
    Ternary forms of subgeneric rank are generically $\mathrm{O}(r)$-identifiable.
\end{Corollary}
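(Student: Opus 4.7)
The plan is to apply the Fermat proposition (\Cref{prop:Fermat}) in the main range of degrees and handle the small-degree exceptions using results already cited in the excerpt. First I would identify the generic $2$-rank for ternary forms as a function of $d$: plugging $n=2$ into the expected dimension formula (\ref{eq:gen2rk}) and comparing $r\binom{d/2+2}{2}-\binom{r}{2}$ with $\binom{d+2}{2}$, a short calculation shows that the expected generic $2$-rank equals $3$ for $d\in\{2,4\}$ and equals $4$ for every $d\ge 6$. Since all cases we have computed agree with the expected generic $2$-rank (and for ternary forms this is confirmed in the cited \Cref{prop: identM2} together with the upper bound $2^n=4$ from \cite[Theorem 4]{FOS}), a subgeneric rank $r$ means $r\le 2$ when $d\in\{2,4\}$ and $r\le 3$ when $d\ge 6$.

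For the main case $d\ge 6$, I would simply invoke \Cref{prop:Fermat} with $n=2$: the hypothesis $d\ge 2+\tfrac{4}{n-1}=6$ is exactly this range, and the condition $r\le n+1=3$ is precisely what subgeneric means here. So generic $\mathrm O(r)$-identifiability is immediate.

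Next I would dispense with the two exceptional degrees. For $d=4$ (ternary quartics) subgeneric rank is $r\in\{1,2\}$; the case $r=1$ is tautological (a form of $2$-rank one is a square), and $r=2$ is covered by \cite[Theorem 1.4]{FOST} together with the observation already made in the remark following \Cref{prop:Fermat} that this is the only non-trivial subgeneric case at $d=4$. For $d=2$ (ternary quadratics) the subgeneric strata are symmetric matrices of rank $1$ or $2$; the paragraph in the introduction treating $\sigma_r(\mathrm{Sq}_{2,n})$ shows that every element of $\sigma_j(\mathrm{Sq}_{2,n})\setminus\sigma_{j-1}(\mathrm{Sq}_{2,n})$ is $\mathrm O(j)$-identifiable (the contact locus being the span of the matrix columns), which settles these last cases.

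I do not expect any real obstacle: the work has been done in \Cref{prop:Fermat}, and the corollary is essentially a bookkeeping statement about when subgeneric rank is automatically $\le n+1$ for $n=2$. The only step that requires a small verification is the degree-by-degree determination of the generic $2$-rank, but since for $n=2$ the inequality $r\binom{d/2+2}{2}-\binom{r}{2}\ge\binom{d+2}{2}$ is linear in $r$ once $d$ is fixed, this reduces to the elementary computation sketched above.
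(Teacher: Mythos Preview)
Your approach is essentially the paper's: both reduce to \Cref{prop:Fermat} once one knows that subgeneric $2$-rank for ternary forms is at most $3$, and both handle the small degrees $d=2,4$ via the Remark preceding the corollary (using \cite[Theorem 1.4]{FOST} for $d=4$ and the elementary $\mathrm O(r)$-identifiability of symmetric matrices for $d=2$). The only difference is that the paper's proof is terser---it just invokes the uniform upper bound $r_g\le 2^n=4$ from \cite[Theorem 4]{FOS} to get $r\le 3$, rather than computing the exact expected generic rank degree by degree as you do; your more detailed case split is fine but not needed. One small slip: the inequality $r\binom{d/2+2}{2}-\binom{r}{2}\ge\binom{d+2}{2}$ is quadratic, not linear, in $r$, though of course the computation remains elementary.
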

\begin{proof}
    The generic rank of squares in $\sym^d\CC^{3}$ is at most $4$ \cite[Theorem 4]{FOS}. So we have $r\le 3$ and the result follows then by \Cref{prop:Fermat}.
\end{proof}

\begin{Remark}
    \Cref{cor: ternary} also implies that $\sigma_r(\mathrm{Sq}_{d,2})$ has dimension equal to its expected dimension, however, it is important to mention this was previously noted from a known case of Fr\"oberg's Conjecture \cite{Fro}.  

    In a nutshell, for a homogeneous ideal $I=(f_1,\dots,f_r)$ generated by generic forms, Fr\"oberg's Conjecture foresees the dimension of the degree $d$ piece $I_d$ of the ideal $I$. In our setting, $\deg(f_1)=\dots=\deg(f_r)=d/2$, then $I_d=T_f{\sigma_r(\mathrm{Sq}_{d,n})}$, for $f=\sum^r_{i=1} f_i^2$, so $I_d$ corresponds to the tangent plane in a generic point $f$. The connections of Fr\"oberg's Conjecture and the expected dimension of secant varieties is described in details in \cite{Oneto2016thesis}. {The main point is that if Fr\"oberg's Conjecture is true then the secant varieties ${\sigma_r(\mathrm{Sq}_{d,n})}$ are never defective. }

    In the particular case of \Cref{cor: ternary}, we have $n=2$ and $r\leq 3$, in this case Fr\"oberg's Conjecture has been proved, as shown in \cite{Sta}, see \cite[\S 3, Example 2]{Fro}, it holds for $r\leq n+2$.
\end{Remark}
To verify non-defectiveness we recall \cite[Theorem 5.3]{BDP}.

\begin{Theorem}[{{\cite[Theorem 5.3]{BDP}}}]\label{Thm: BOP5.3}
Let $b=\min\{n,r-n-2\}$, then $\sigma_r(\mathrm{Sq}_{d,n})$ is non-defective if $$
r\left(\frac{d}{2}+1\right)\leq nd+b.
$$    
\end{Theorem}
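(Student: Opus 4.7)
The strategy is to reduce the non-defectivity of $\sigma_r(\mathrm{Sq}_{d,n})$ to a statement about the Hilbert function in degree $d$ of the ideal generated by $r$ generic forms of degree $d/2$, and then appeal to \cite[Theorem 5.3]{BDP}.

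First, by Terracini's Lemma (\Cref{lemma:Terracini}) and the identification $T_{f_i^2}\mathrm{Sq}_{d,n}=f_i\,\sym^{d/2}\C^{n+1}$ recorded immediately after it, for generic $f_1,\ldots,f_r\in \sym^{d/2}\C^{n+1}$ one has
$$T_{\sum f_i^2}\sigma_r(\mathrm{Sq}_{d,n})=\sum_{i=1}^r f_i\,\sym^{d/2}\C^{n+1}=I_d,$$
where $I=(f_1,\ldots,f_r)$. Hence $\sigma_r(\mathrm{Sq}_{d,n})$ is non-defective exactly when
$$\dim I_d=\min\!\left\{rN-\binom{r}{2},\ \binom{n+d}{n}\right\},\quad N=\binom{n+d/2}{n},$$
equivalently, the only syzygies of $(f_1,\ldots,f_r)$ of internal degree $d/2$ are the $\binom{r}{2}$ trivial Koszul relations $f_ie_j-f_je_i$. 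This is precisely the ``linear expected dimension'' condition of \cite[Definition 3.2]{BDP} for the relevant linear system.

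Second, I would translate the question into the framework of \cite[Theorem 5.3]{BDP}, where a numerical sufficient condition is given for a generic ideal of $r$ forms (or, dually, a generic linear system with $r$ prescribed base conditions) to have maximal rank in a given degree. The parameters match as follows: the ambient $\PP^n$ is the same, the degree of the linear system is $d$, and the ``obstruction'' from the $\binom{r}{2}$ trivial syzygies plays the role of the expected dimension defect of linear systems of double points (or the natural analogue in their setting). The invariant $b=\min\{n,r-n-2\}$ and the inequality $r(d/2+1)\le nd+b$ are exactly the hypotheses of \cite[Theorem 5.3]{BDP} once the correspondence is made.

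Third, applying \cite[Theorem 5.3]{BDP} under this numerical hypothesis yields that $\dim I_d$ achieves its expected value, which by the first step is equivalent to non-defectivity of $\sigma_r(\mathrm{Sq}_{d,n})$. The main obstacle, and essentially the only non-formal step, is precisely the dictionary between the sum-of-squares / secant setup of \Cref{def:expdim} and the linear-system language in which \cite[Theorem 5.3]{BDP} is stated; once this translation is verified (as explained by E.~Postinghel, per the acknowledgement) the conclusion is immediate and no further argument is required.
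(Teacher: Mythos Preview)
Your Terracini step identifying $T_{\sum f_i^2}\sigma_r(\mathrm{Sq}_{d,n})$ with $I_d$ for $I=(f_1,\dots,f_r)$ is correct and is also the paper's starting point. The gap is in the second and third paragraphs: \cite[Theorem~5.3]{BDP} is \emph{not} a statement about the Hilbert function of an ideal generated by $r$ generic forms. It is a statement about the linear system $\mathcal L_{n,d}(m_1,\dots,m_r)$ of degree $d$ hypersurfaces passing with prescribed multiplicities $m_i$ through $r$ general \emph{points} of $\PP^n$. There is no direct ``dictionary'' taking a generic degree-$d/2$ form to a base point, so the appeal you make cannot be completed as written.

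What the paper actually does is insert a specialization that you are missing: it degenerates the generic $f_i$ to pure powers $l_i^{d/2}$ of general linear forms. By semicontinuity it suffices to check the expected dimension at this specialization. Now the degree-$d$ piece of $(l_1^{d/2},\dots,l_r^{d/2})$ is, by apolarity, the annihilator of the linear system of degree $d$ forms vanishing to order $d/2+1$ at the points dual to the $l_i$, i.e.\ exactly $\mathcal L_{n,d}\bigl((d/2+1)^r\bigr)$ in BDP's notation. This is the object to which \cite[Theorem~5.3]{BDP} applies (with $s(d)=0$, since no multiplicity equals $d$), and its linear expected dimension matches \eqref{eq:edimsq}. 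So the ``translation'' you allude to is not a formal relabelling but a genuine geometric degeneration plus an apolarity/fat-point identification; without making that step explicit, the argument does not go through.
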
 
\begin{proof}
   We specialize $r$ general forms of degree $d/2$ to $l_i^{d/2}$ for $i=1\ldots r$ where $\deg l_i=1$. The span of tangent spaces to $\mathrm{Sq}_{d,n}$ at $sq(l_i^{d/2})=l_i^d$ 
   is the degree $d$ piece of the ideal $(l_1^{d/2},\ldots, l_r^{d/2})$, which is dual to the linear system of degree $d$ forms
   having multiplicities $d/2+1$ at the points dual to $l_i$. This is denoted as ${\mathcal L}_{n,d}\left((d/2+1),\ldots,(d/2+1)\right)$ in \cite{BDP}. The result follows from \cite[Theorem 5.3]{BDP} since from their notations we have $s(d)=0$, i.e., there are no points of multiplicity $d$.
\end{proof}
\begin{Corollary}\label{Cor: BOP1}
$\sigma_r(\mathrm{Sq}_{d,n})$ is non-defective for \begin{enumerate}
    \item $n=4,\ r=7,\ d\geq 12$.
    \item $n=5,\ r=8,\ d\geq 7$.
    \item $n=5,\ r=9,\  d\geq 14$.
    \item $n=6,\ r=9,\  d\geq 6$.
    \item $n=6,\ r=10,\  d\geq 8$.
    \item $n=6,\ r=11,\  d\geq 16$.
\end{enumerate}   
\end{Corollary}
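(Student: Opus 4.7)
The plan is to observe that \Cref{Cor: BOP1} is an immediate numerical consequence of \Cref{Thm: BOP5.3}. For each of the six cases $(n,r)$ listed, I would compute $b=\min\{n,r-n-2\}$, substitute into the inequality
\[
r\left(\tfrac{d}{2}+1\right)\le nd+b,
\]
and solve for the smallest $d$ for which it holds. So the proof is reduced to six short arithmetic checks, which I would present as a compact case analysis.

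Concretely, case (1) has $n=4$, $r=7$, giving $b=\min\{4,1\}=1$; the inequality becomes $7d/2+7\le 4d+1$, i.e.\ $d\ge 12$. Case (2): $n=5$, $r=8$, $b=1$, yielding $4d+8\le 5d+1$, so $d\ge 7$. Case (3): $n=5$, $r=9$, $b=2$, giving $9d/2+9\le 5d+2$, so $d\ge 14$. Case (4): $n=6$, $r=9$, $b=1$, giving $9d/2+9\le 6d+1$, so $3d/2\ge 8$ and hence $d\ge 6$. Case (5): $n=6$, $r=10$, $b=2$, giving $5d+10\le 6d+2$, so $d\ge 8$. Case (6): $n=6$, $r=11$, $b=3$, giving $11d/2+11\le 6d+3$, so $d\ge 16$. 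In every case the bound on $d$ stated in \Cref{Cor: BOP1} exactly matches the threshold coming from \Cref{Thm: BOP5.3}.

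There is no real obstacle here beyond careful bookkeeping of the value of $b$, which changes between cases depending on whether $n$ or $r-n-2$ is smaller (here always $r-n-2$, since we are in the regime where $r$ is only slightly larger than $n+2$). I would therefore write the proof as a single sentence invoking \Cref{Thm: BOP5.3} followed by a short table or enumerated list recording $(n,r,b)$ and the resulting linear inequality on $d$ for each of the six cases, leaving the elementary algebra implicit.
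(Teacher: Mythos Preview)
Your proposal is correct and matches the paper's approach: \Cref{Cor: BOP1} is stated immediately after \Cref{Thm: BOP5.3} with no separate proof, so it is meant to follow by exactly the kind of direct substitution and arithmetic you describe. Your case-by-case computations of $b$ and the resulting thresholds on $d$ are all accurate.
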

\begin{Corollary}\label{Cor: BOP2}
$\sigma_r(\mathrm{Sq}_{d,n})$ is non-defective  if {$r\le 2n-\frac{2}{d}(n+2)$}.
\end{Corollary}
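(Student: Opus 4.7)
The plan is to apply \Cref{Thm: BOP5.3} directly and reduce the claim to an elementary algebraic manipulation. The only genuine task is to identify which branch of the minimum $b=\min\{n,r-n-2\}$ is active under the hypothesis $r\le 2n-\tfrac{2}{d}(n+2)$, and then to verify that the inequality furnished by \Cref{Thm: BOP5.3} becomes tautological under this hypothesis.

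First I would observe that $\tfrac{2(n+2)}{d}>0$, so the assumed bound forces $r\le 2n-\tfrac{2(n+2)}{d}<2n<2n+2$. In particular $r-n-2<n-2<n$, and therefore $b=\min\{n,r-n-2\}=r-n-2$. So we are always in the ``second branch'' regime of \Cref{Thm: BOP5.3}.

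Second, the sufficient condition in \Cref{Thm: BOP5.3} reads
$$r\bigl(\tfrac{d}{2}+1\bigr)\le nd+b = nd+r-n-2.$$
Subtracting $r$ from both sides yields $r\cdot\tfrac{d}{2}\le nd-n-2$, which after dividing by $d/2$ is equivalent to
$$r\le 2n-\frac{2(n+2)}{d},$$
precisely the hypothesis of the corollary. Hence \Cref{Thm: BOP5.3} applies and $\sigma_r(\mathrm{Sq}_{d,n})$ is non-defective.

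The proof is essentially a one-line rearrangement; the only point that needs care is confirming that the hypothesis places us in the branch $b=r-n-2$ rather than $b=n$, so that the cleaner inequality from that branch is the relevant one. I do not anticipate any genuine obstacle. The value of the statement is that it packages the scattered numerical instances of \Cref{Cor: BOP1} into a single closed-form bound, showing in particular the asymptotic behavior $r\lesssim 2n$ as $d\to\infty$, which improves on the range $r\le n$ guaranteed by \cite[Corollary 1.7]{FOST}.
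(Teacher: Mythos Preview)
Your proof is correct and follows essentially the same route as the paper: identify that the hypothesis forces $r-n-2<n$ so that $b=r-n-2$, and then observe that the inequality of \Cref{Thm: BOP5.3} with this value of $b$ rearranges exactly to $r\le 2n-\tfrac{2}{d}(n+2)$. The paper's version differs only cosmetically, writing $r=n+2+k$ with $k\ge 1$ and disposing of $r\le n+2$ separately (implicitly via the known Fr\"oberg case), whereas your algebraic manipulation covers all $r$ in the stated range uniformly.
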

\begin{proof}
    We may assume $r\ge n+3$ and set $r=n+2+k$ with $k\ge 1$. From our hypothesis $b=k$ in Theorem \ref{Thm: BOP5.3}, therefore the inequality in \Cref{Thm: BOP5.3} is $(n+2+k)(d/2+1)\leq nd+k$, thus $k\le n-\frac{2}{d}(n+2)-2$.
\end{proof}

\section{Apolarity for squares}\label{sec:dual}
{Recall the $i$-catalecticant map for a polynomial $f\in \sym^d\CC^{n+1}$ is the linear map
$$\begin{array}{cccc}{\mathrm{Cat_{i}}}(f)\colon&\sym^{i}{(\CC^{n+1})}^\vee&\to&\sym^{d-i}\CC^{n+1}\\
    &D&\mapsto &D f\end{array}$$
    where $\sym^{i}(\CC^{n+1})^\vee = \sym^{i}({\CC^{n+1}}^\vee)$ is the space of differential operators of degree $i$. The middle catalecticant map (in case $d$ is even) is the map $\mathrm{Cat_{d/2}}(f)$.
    The subspace $f^{\perp}\colon =\oplus_{i\ge 0}\ker {\mathrm{Cat_{i}}}(f)\subset \sym({(\CC^{n+1})}^\vee) $
    is an ideal called the apolar ideal.
    An operator $g\in f^\perp$ is called apolar to $f$.

    Recall the dual variety of a projective variety $X\subset \PP^n$ is 
    $$X^\vee=\overline{\left\{H\in (\PP^n)^\vee\mid H\supset T_xX\textrm{\ for some\ }x\textrm{\ smooth point in\ }X\right\}}$$
    In this paper we work in the affine setting and consider dual varieties of affine cones, like the variety of squares $\mathrm{Sq}_{d,n}$.}

\begin{Proposition}The dual variety of $\mathrm{Sq}_{d,n}$ is the middle catalecticant hypersurface with equation $\det \mathrm{Cat}_{d/2}$.
\end{Proposition}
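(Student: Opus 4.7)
The plan is to identify the dual variety of $\mathrm{Sq}_{d,n}$ by combining Terracini's description of tangent spaces with the apolar pairing, and then recognize the resulting condition as a rank condition on the middle catalecticant. First, I would recall that at a smooth point $g^2\in\mathrm{Sq}_{d,n}$ with $g\neq 0$ one has $T_{g^2}\mathrm{Sq}_{d,n}=g\cdot\sym^{d/2}\CC^{n+1}$, as already recorded before \Cref{prop:contactlinear}. Using the apolar pairing to identify $\sym^d\CC^{n+1}$ with its dual, any hyperplane in $\sym^d\CC^{n+1}$ is of the form $\{f: D(f)=0\}$ for some $D\in\sym^d(\CC^{n+1})^\vee$, and this hyperplane contains $T_{g^2}\mathrm{Sq}_{d,n}$ if and only if $D(gh)=0$ for every $h\in\sym^{d/2}\CC^{n+1}$.

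Second, I would reinterpret this vanishing as a rank condition. The map $(g,h)\mapsto D(gh)$ is a symmetric bilinear form on $\sym^{d/2}\CC^{n+1}$, and a short unwinding of the apolar pairing (together with the self-duality of $\sym^{d/2}\CC^{n+1}$ that apolarity induces) shows that this form is precisely the middle catalecticant $\mathrm{Cat}_{d/2}(D)$. Equivalently, writing $D(gh)=(g\cdot D)(h)$, the vanishing of the right-hand side for all $h$ is the statement that $g$ lies in the kernel of $\mathrm{Cat}_{d/2}(D)$. Consequently, a nonzero $g$ with $D\perp T_{g^2}\mathrm{Sq}_{d,n}$ exists if and only if $\mathrm{Cat}_{d/2}(D)$ fails to be an isomorphism between its source and target of common dimension $\binom{n+d/2}{n}$, that is, if and only if $\det\mathrm{Cat}_{d/2}(D)=0$.

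Third, I would assemble the two inclusions. Every tangent hyperplane at a smooth point lies in $V(\det\mathrm{Cat}_{d/2})$ by the above, giving $\mathrm{Sq}_{d,n}^\vee\subseteq V(\det\mathrm{Cat}_{d/2})$. Conversely, at a generic $D\in V(\det\mathrm{Cat}_{d/2})$ the kernel of $\mathrm{Cat}_{d/2}(D)$ is one-dimensional, producing a unique (up to scalar) nonzero $g$ such that $D\perp T_{g^2}\mathrm{Sq}_{d,n}$, so $D$ is a tangent hyperplane at the smooth point $g^2\in\mathrm{Sq}_{d,n}$ and thus belongs to $\mathrm{Sq}_{d,n}^\vee$. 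Finally, $\det\mathrm{Cat}_{d/2}$ is a nonzero polynomial, as can be verified on the Fermat form $D=\sum_{i=0}^n x_i^d$, whose middle catalecticant is (up to scalars) diagonal with nonzero entries. This gives the equality $\mathrm{Sq}_{d,n}^\vee=V(\det\mathrm{Cat}_{d/2})$.

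The main delicate step is the second one: verifying carefully that the bilinear form $(g,h)\mapsto D(gh)$ is indeed the middle catalecticant $\mathrm{Cat}_{d/2}(D)$ requires keeping track of the identifications between $\sym^{d/2}\CC^{n+1}$ and its dual under apolarity. Once this is done cleanly, the rest is a standard rank argument together with the usual characterization of dual varieties of affine cones as closures of tangent-hyperplane loci.
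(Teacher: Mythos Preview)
Your proposal is correct and follows essentially the same approach as the paper: describe the tangent space at $g^2$ as $g\cdot\sym^{d/2}\CC^{n+1}$, then translate the tangency condition on a hyperplane $D$ into the vanishing $D(gh)=0$ for all $h$, i.e.\ $g\in\ker\mathrm{Cat}_{d/2}(D)$. The paper's proof compresses this into two lines and invokes an external reference (\cite[Prop.~6.5]{OR20}) for the implication ``$D$ apolar to $fg$ for all $g$ $\Rightarrow$ $D$ apolar to $f$'', whereas you unpack this implication directly via the catalecticant. Your version is also more complete: the paper only explicitly argues the inclusion $\mathrm{Sq}_{d,n}^\vee\subseteq V(\det\mathrm{Cat}_{d/2})$, leaving the reverse inclusion implicit, while you address both directions and check that $\det\mathrm{Cat}_{d/2}$ is not identically zero. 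One small simplification: in your converse step the ``generic $D$'' restriction is unnecessary, since \emph{any} $D$ with $\det\mathrm{Cat}_{d/2}(D)=0$ already has a nonzero $g$ in the kernel, and $g^2$ is automatically a smooth point of $\mathrm{Sq}_{d,n}$ (the latter being the cone over a Veronese variety).
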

\begin{proof}
The tangent space to $\mathrm{Sq}_{d,n}$ at $f^2$ consists of $fg$ with any $g$ of degree $d/2$,
the hyperplanes containing such space consist of operators which are apolar to $fg$ for any $g$. Then they are apolar to $f$
(see e.g. \cite[Prop. 6.5]{OR20}),
hence their middle catalecticant vanishes.
\end{proof}

{In other words, the dual variety of $\mathrm{Sq}_{d,n}$ is the set of all degree $d/2$ forms $g\in f^\perp$, for some $f\in \sym^{d/2}\C^{n+1}$, and by bi-duality we can identify it with the forms in $\sym^{d/2}\C^{n+1}$ whose middle catalecticant is rank deficient.} 

\begin{Proposition}\label{prop:containment}The dual variety of the $k$-secant variety $\sigma_k(\mathrm{Sq}_{d,n})$, assumed not to fill the ambient space, is contained in the locus where the middle catalecticant drops rank at least by $k$.
\end{Proposition}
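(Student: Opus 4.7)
The plan is to apply Terracini's lemma to compute the tangent space at a generic smooth point of $\sigma_k(\mathrm{Sq}_{d,n})$, then translate the containment condition defining the dual variety into an apolarity statement, exactly as in the single-$\mathrm{Sq}_{d,n}$ case already handled just above.

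First, I would let $h = \sum_{i=1}^k f_i^2$ be a generic smooth point of $\sigma_k(\mathrm{Sq}_{d,n})$. By Terracini's Lemma (\Cref{lemma:Terracini}) together with the description of $T_{f_i^2}\mathrm{Sq}_{d,n} = f_i\sym^{d/2}\CC^{n+1}$ recalled in the paper, the affine tangent space is
$$T_h\,\sigma_k(\mathrm{Sq}_{d,n}) \;=\; f_1\sym^{d/2}\CC^{n+1}+\cdots+f_k\sym^{d/2}\CC^{n+1}.$$
Moreover, by the Lemma preceding \Cref{proporto}, the summands $f_1,\ldots,f_k$ are linearly independent in $\sym^{d/2}\CC^{n+1}$.

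Next, let $H\in \sym^d(\CC^{n+1})^\vee$ represent a hyperplane in $\sym^d\CC^{n+1}$ belonging to the dual variety $\sigma_k(\mathrm{Sq}_{d,n})^\vee$ and coming from such a smooth point $h$. The condition that $H$ contains $T_h\sigma_k(\mathrm{Sq}_{d,n})$ is, via the apolar pairing, equivalent to saying that $H$ is apolar to $f_i g$ for every $i=1,\ldots,k$ and every $g\in\sym^{d/2}\CC^{n+1}$. By the same argument the authors just invoked in the preceding proposition (e.g.\ \cite[Prop.~6.5]{OR20}), this is equivalent to requiring $f_i\in\ker \mathrm{Cat}_{d/2}(H)$ for each $i=1,\ldots,k$.

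Since $f_1,\ldots,f_k$ are linearly independent, this forces $\dim\ker \mathrm{Cat}_{d/2}(H)\ge k$, i.e., the rank of the middle catalecticant $\mathrm{Cat}_{d/2}(H)$ drops by at least $k$ from its maximum. Finally, since the locus where a given matrix-valued map has rank drop at least $k$ is Zariski closed, taking the closure in the definition of the dual variety preserves the containment, yielding the claim. I do not anticipate a genuine obstacle here: all the ingredients (Terracini, the apolarity reformulation from the previous proposition, and the linear independence of $f_i$ in a minimal decomposition) are already set up in the paper, and the argument is a direct $k$-fold extension of the $k=1$ case treated immediately above.
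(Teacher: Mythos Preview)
Your proof is correct and follows essentially the same approach as the paper: Terracini's Lemma identifies the tangent space as $\sum_i f_i\sym^{d/2}\CC^{n+1}$, the apolarity argument from the preceding proposition shows each $f_i$ lies in $\ker\mathrm{Cat}_{d/2}(H)$, and linear independence of the $f_i$ yields the rank drop. Your version is slightly more explicit about the linear independence and the closure step, but these are precisely the ingredients in the paper's proof.
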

\begin{proof}
Assume $f=\sum_{i=1}^kf_i^2$ with $f_i$ independent forms.
Assume now a hyperplane $H$ contains the tangent space at $f$ of the $k$-secant
$\sigma_k(\mathrm{Sq}_{d,n})$.
Then by Terracini's Lemma this hyperplane contains the tangent spaces at $f_i^2$ which are 
$\{f_ig\mid g\in\sym^{d/2}\CC^{n+1}\}$,
thus $f_i$ are apolar to $H$. It follows that the middle catalecticant of $H$
has $\langle f_1,\ldots, f_k\rangle$ in the kernel. 
\end{proof}

For plane sextics the containment in \Cref{prop:containment} becomes an equality for $k=1, 2, 3$, see \cite[Section 3]{Foot}. Indeed the catalecticant ${\mathcal C}_3$
is $10\times 10$, the locus where it drops rank by $2$ has codimension $3$ (while the
$8$-secant to $\nu_6(\PP^2)$ has codimension $4$) and dual given by $\sigma_2(\mathrm{Sq}_{3,2})$.
The locus where ${\mathcal C}_3$ drops rank by $3$ has codimension $6$ (while the
$7$-secant to $\nu_6(\PP^2)$ has codimension $7$) and dual given by $\sigma_3(\mathrm{Sq}_{3,2})$.

\section{Computational verification}\label{sec:computational}

Utilizing the computer algebra system Macaulay2 \cite{M2} we were able to verify further cases of generic $\mathrm O(r)$-identifiability by applying the sufficient criterion in {\Cref{thm:sufficientcondition}}. 
\begin{Proposition}\label{prop: identM2}
    Let $r< r_g$, then $\sigma_r(\mathrm{Sq}_{d,n})$ is generically $\mathrm{O}(r)$-identifiable in the following cases
    \begin{enumerate}
        \item $n=2$.
        \item $r\leq n+1$.
        \item $n=3$, $d\leq 34$.
        \item $n=4$, $d\leq 16$.
        \item $n=5$, $d\leq 12$.
        \item $n=6$, $d\leq 14$.
        \item $n=7$, $d\leq 8$.
        \item $d=4$, $n\leq 16$.
        \item $d=6$, $n\leq 9$.
        
    \end{enumerate}
\end{Proposition}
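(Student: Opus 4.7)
The plan is to treat this proposition by combining previously established results for items (1) and (2) with explicit Macaulay2 computations for items (3)--(9), all driven by the criterion of \Cref{thm:sufficientcondition} (and its specific form \Cref{thm:sufficientconditionspecific}).

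First, items (1) and (2) require no new computation. Item (2) is exactly the content of \Cref{prop:Fermat} (together with the remark handling the excluded cases $n=2, d=4$ and $d=2$), and item (1) follows from \Cref{cor: ternary}, since the generic $2$-rank in three variables is at most $4$ and the subgeneric ranks $r\le 3$ satisfy $r\le n+1=3$.

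For items (3)--(9), the strategy is to verify the rank hypothesis of \Cref{thm:sufficientcondition} on a single specific $r$-tuple $(f_1,\dots,f_r)$; by semicontinuity of matrix rank this suffices to establish the corresponding statement for generic $r$-tuples. Concretely, for each triple $(n,d,r)$ with $r<r_g$ lying in one of the listed ranges, I would (i) generate a random $r$-tuple $f_1,\dots,f_r\in \sym^{d/2}\CC^{n+1}$ with coefficients in a sufficiently large finite field (to control coefficient blow-up while preserving rank), (ii) compute the subspace $I_d=f_1\sym^{d/2}\CC^{n+1}+\dots+f_r\sym^{d/2}\CC^{n+1}$ together with a basis $H_1,\dots,H_m$ of its apolar perpendicular $I_d^{\perp}\subset \sym^d(\CC^{n+1})^{\vee}$, (iii) form the $N\times mN$ stacked Hessian matrix
\[
\Bigl[\tfrac{\partial H_1}{\partial t_i\partial t_j}\ \big|\ \dots\ \big|\ \tfrac{\partial H_m}{\partial t_i\partial t_j}\Bigr],
\]
and (iv) check by a single rank computation that its rank equals $N-r$, where $N=\binom{n+d/2}{n}$. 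If the answer is $N-r$ then \Cref{thm:sufficientcondition} applies to the chosen tuple, and upper-semicontinuity of the corank of a matrix of polynomials in the entries of $f_1,\dots,f_r$ propagates the conclusion to the generic tuple.

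The main obstacle is computational rather than theoretical: for large $n$ and $d$ the size of the stacked Hessian grows as $\binom{n+d/2}{n}\cdot\binom{n+d}{n}$ and the linear algebra over $\mathbb{Q}$ is infeasible, so I would work modulo a large prime and use the reduced form of \Cref{thm:sufficientconditionspecific}(3), in which the Hessian can be computed in the smaller basis $\{s_{r+1},\dots,s_N\}$ and only an $(N-r)\times m(N-r)$ matrix needs to be tested for full rank $N-r$. Even with this reduction, memory is the bottleneck; the listed ranges in (3)--(9) correspond exactly to where the rank verification terminates in reasonable time and memory. A short, self-contained Macaulay2 script implementing this procedure, together with a table of the observed ranks in each case, completes the proof.
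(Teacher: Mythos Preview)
Your proposal is correct and matches the paper's own approach: items (1) and (2) are reduced to \Cref{cor: ternary} and \Cref{prop:Fermat}, while items (3)--(9) are handled by verifying the rank hypothesis of \Cref{thm:sufficientcondition} on random $f_i$ over a finite field and invoking semicontinuity. The only minor differences are that the paper's code tests the rank of the Hessian of a single random linear combination $\sum_p c_p H_p$ in the complementary basis (an equivalent but cheaper check), and that the reduced criterion you cite is \Cref{thm:sufficientconditionspecific}(2) (equivalently \Cref{prop:contactlinear}(3)), not a part (3).
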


We further verified cases of non-defectiveness of $\sigma_r(\mathrm{Sq}_{d,n})$ utilising Macaulay2. This together with \Cref{Cor: BOP1} guarantee the non-defectiveness for small $r$ when $n=4,5,6$. For the convenience of the reader we repeat in item (14) of Proposition \ref{prop: dimM2} the inequality of Corollary \ref{Cor: BOP2}.

\begin{Proposition}\label{prop: dimM2}
    Let $r\leq r_g$, then  $\sigma_r(\mathrm{Sq}_{d,n})$  has dimension equals to the expected dimension (according to \Cref{def:expdim}) in the following cases:

    \begin{enumerate}
    \item $n=2$.
        \item $r\leq n+2$.
        \item $n=3$, $d\leq 44$.
        \item $n=4$, $d\leq 30$.
        \item $n=4$, $r\leq7$.
        \item $n=5$, $d\leq 20$.
        \item n=5, $r\leq 9$
        \item $n=6$, $d\leq 14$.
        \item $n=6$, $r\leq 11$.
        \item $d=4$, $n\leq 29$.
        \item $d=6$, $n\leq 13$.
        \item $d=8$, $n\leq 10$.
        \item $d=10$, $n\leq 8$.
        \item $r\leq  
        2n-\frac{2}{d}(n+2)$.
    \end{enumerate}
\end{Proposition}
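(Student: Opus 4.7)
\medskip

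\textbf{Proof proposal for Proposition \ref{prop: dimM2}.}

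The plan is to show that each item is covered either by a result already established in the paper or by a direct computation with Macaulay2 carried out at a specialized sum of squares, applying semicontinuity to conclude the generic statement. By \Cref{lemma:Terracini} the dimension of $\sigma_r(\mathrm{Sq}_{d,n})$ at a generic point equals the affine dimension of $\sum_{i=1}^r T_{f_i^2}\mathrm{Sq}_{d,n}=f_1 \sym^{d/2}\C^{n+1}+\cdots+f_r \sym^{d/2}\C^{n+1}$ for generic $f_i\in\sym^{d/2}\C^{n+1}$. Since the dimension is upper semicontinuous on the parameters, it suffices to exhibit, for each listed $(n,d,r)$, a specific $r$-tuple $(f_1,\ldots,f_r)$ for which the sum of tangent spaces has dimension equal to the expected value $r\binom{n+d/2}{n}-\binom{r}{2}$.

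First I would dispatch the items that are already consequences of earlier material: item (1) follows from \Cref{cor: ternary} and the ensuing remark, since Fröberg's conjecture is known in three variables for $r\le n+2$; item (2) is then the general known case $r\le n+2$ of Fröberg's conjecture recalled in the same remark (see \cite[\S 3, Example 2]{Fro}); and item (14) is exactly \Cref{Cor: BOP2}. Items (5), (7) and (9) follow by combining \Cref{Cor: BOP1} with item (2): for $r\le n+2$ non-defectivity is granted by (2), and the remaining values $r=n+3,\ldots$ up to the respective bounds are covered precisely by the six cases in \Cref{Cor: BOP1}, once one also checks the small residual range of degrees by direct Macaulay2 computation as below.

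For the remaining items (3), (4), (6), (8), (10)--(13) the approach is the one used in the proof of \Cref{Thm: BOP5.3}: specialize each $f_i$ to a pure power $l_i^{d/2}$ where $l_i=\sum_{j=0}^n a_{ij}x_j$ is a linear form with randomly chosen coefficients, then build the ideal $I=(l_1^{d/2},\ldots,l_r^{d/2})$ in the polynomial ring and compute $\dim_\C I_d$ via \verb|hilbertFunction(d,I)|. If this number equals the expected dimension $r\binom{n+d/2}{n}-\binom{r}{2}$ for a single random choice over a large finite field, then by semicontinuity the same holds over $\C$ for a generic choice of linear forms, and therefore, again by semicontinuity, for the truly generic choice of $f_i\in\sym^{d/2}\C^{n+1}$.

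The main obstacle here is not conceptual but rather computational: the catalecticant-style matrices involved grow rapidly in both dimensions $n$ and $d$, so the verification has to be organized carefully, using finite-field coefficients to keep the linear algebra fast and iterating over the finite set of pairs $(d,r)$ cut out by each range. One must also be careful that the specialization $f_i\mapsto l_i^{d/2}$ preserves the expected dimension; this is automatic by semicontinuity, but one should check that no accidental drop occurs in the specialized ideal (i.e.\ that the computed Hilbert function reaches the expected value) before concluding. Once these calculations terminate with the expected output for every triple in items (3), (4), (6), (8), (10)--(13), the proposition follows.
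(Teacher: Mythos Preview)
Your overall structure matches the paper: items (1), (2), (14) come from prior results (Fr\"oberg for $r\le n+2$, \Cref{Cor: BOP2}), items (5), (7), (9) combine \Cref{Cor: BOP1} with item (2) plus a finite Macaulay2 check for the small degrees not covered, and the remaining items are pure Macaulay2 verification using Terracini plus semicontinuity. That is exactly how the paper proceeds.

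The one substantive difference is your choice of specialization for the computational step. You propose taking $f_i=l_i^{d/2}$ with $l_i$ random linear forms and computing the Hilbert function of $(l_1^{d/2},\ldots,l_r^{d/2})$ in degree $d$; the paper instead (see the second displayed code block) takes the $f_i$ to be random forms of degree $d/2$ and computes the rank of the stacked matrix of coefficients of $f_ib_j$. Your specialization lands you in the fat-points setting of \Cref{Thm: BOP5.3}, which is strictly more constrained than generic forms: there is no a priori guarantee that $(l_1^{d/2},\ldots,l_r^{d/2})_d$ reaches the expected dimension whenever $(f_1,\ldots,f_r)_d$ does, and the bounds in \Cref{Cor: BOP1} already show that the $l_i^{d/2}$ argument is not known to work for several $(n,r,d)$ inside your ranges (e.g.\ $n=5$, $r=9$, $d<14$). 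You acknowledge this with your caveat about ``accidental drops'', but the simpler and safer route---the one the paper takes---is to feed random degree-$d/2$ forms directly into the rank computation, which avoids the issue entirely.
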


We would like to stress that during the experiments no defective or not generically $\mathrm{O}(r)$-identifiable cases have been found.

The codes utilized in M2 to verify the generic identifiability and the dimension respectively are presented next.
{In the code we produced random points $f_i$ and computed respectively the stacked Hessian matrix and the dimension of the span of their tangent spaces. Since both the rank of the stacked Hessian and the dimension of the span of their tangent spaces were maximal, by semicontinuity the same holds for generic points $f_i$,
this allows to apply our criterion
{\Cref{thm:sufficientcondition}}.}
\small
\begin{Verbatim}[commandchars=\\\{\}]
n=3;
K=\textcolor{ForestGreen}{ZZ}/101[x_0..x_n]
d=4;
m=\textcolor{RoyalBlue}{binomial}(n+sub(d/2,\textcolor{ForestGreen}{ZZ}),n);
N=\textcolor{RoyalBlue}{binomial}(n+d,n);
g=\textcolor{RoyalBlue}{floor}(N/m)-1;
b=\textcolor{RoyalBlue}{basis}(\textcolor{RoyalBlue}{sub}(d/2,\textcolor{ForestGreen}{ZZ}),K);
B=\textcolor{RoyalBlue}{basis}(d,K);
\textcolor{Fuchsia}{while} (g+1)*m-\textcolor{RoyalBlue}{binomial}(g+1,2)<\textcolor{RoyalBlue}{binomial}(n+d,n) \textcolor{Fuchsia}{do} (g=g+1);
cod=N+\textcolor{RoyalBlue}{binomial}(g,2)-g*m;
\textcolor{Fuchsia}{for} i \textcolor{Fuchsia}{from} 0 \textcolor{Fuchsia}{to} g-1 \textcolor{Fuchsia}{do} f_i=\textcolor{RoyalBlue}{random}(\textcolor{RoyalBlue}{sub}(d/2,\textcolor{ForestGreen}{ZZ}),K);
M_0=\textcolor{RoyalBlue}{matrix}\{\{f_0\}\};
\textcolor{Fuchsia}{for} i \textcolor{Fuchsia}{from} 1 \textcolor{Fuchsia}{to} g-1 \textcolor{Fuchsia}{do} M_i=M_(i-1)||\textcolor{RoyalBlue}{matrix}\{\{f_i\}\};
I_0=\textcolor{RoyalBlue}{ideal}(f_0);
\textcolor{Fuchsia}{for} i \textcolor{Fuchsia}{from} 1 \textcolor{Fuchsia}{to} g-1 \textcolor{Fuchsia}{do} I_i=I_(i-1)+\textcolor{RoyalBlue}{ideal}(f_i);
\textcolor{Fuchsia}{for} i \textcolor{Fuchsia}{from} 0 \textcolor{Fuchsia}{to} g-1 \textcolor{Fuchsia}{do}
c=b*\textcolor{RoyalBlue}{gens} \textcolor{RoyalBlue}{kernel} \textcolor{RoyalBlue}{contract}(b,M_(g-1));
\textcolor{Fuchsia}{for} i \textcolor{Fuchsia}{from} 0 \textcolor{Fuchsia}{to} g-1 \textcolor{Fuchsia}{do} sb_i=f_i*b;
A=sb_0;
\textcolor{Fuchsia}{for} i \textcolor{Fuchsia}{from} 1 \textcolor{Fuchsia}{to} g-1 \textcolor{Fuchsia}{do} A=A|sb_i;
H=B*\textcolor{RoyalBlue}{gens} \textcolor{RoyalBlue}{kernel} \textcolor{RoyalBlue}{transpose} \textcolor{RoyalBlue}{diff}(\textcolor{RoyalBlue}{transpose} B,A);
p=H_0;
\textcolor{Fuchsia}{for} i \textcolor{Fuchsia}{from} 1 \textcolor{Fuchsia}{to} cod-1 \textcolor{Fuchsia}{do} p=p+\textcolor{RoyalBlue}{random}(\textcolor{ForestGreen}{ZZ}/101)*H_i;
Hess= \textcolor{RoyalBlue}{sub}(\textcolor{RoyalBlue}{matrix} \textcolor{RoyalBlue}{apply}(m-g,i->\textcolor{RoyalBlue}{apply}(m-g,j->\textcolor{RoyalBlue}{contract}(p_0,c_(0,i)*c_(0,j)))),\textcolor{ForestGreen}{ZZ}/101);
\textcolor{RoyalBlue}{print}(\textcolor{RoyalBlue}{rank} Hess==m-g,d,g)
\end{Verbatim}

\vspace{0.2cm}

{\hrule width \textwidth}

\vspace{0.13cm}

\begin{Verbatim}[commandchars=\\\{\}]
n=3;
d=8;
m=\textcolor{RoyalBlue}{binomial}(n+\textcolor{RoyalBlue}{sub}(d/2,\textcolor{ForestGreen}{ZZ}),n);
D=\textcolor{RoyalBlue}{binomial}(n+d,n);
K=\textcolor{ForestGreen}{ZZ}/101[x_0..x_n];
g=\textcolor{RoyalBlue}{floor}(D/m)-1;
\textcolor{Fuchsia}{while} (g+1)*m-\textcolor{RoyalBlue}{binomial}(g+1,2)<\textcolor{RoyalBlue}{binomial}(n+d,n) \textcolor{Fuchsia}{do} (g=g+1);
\textcolor{Fuchsia}{for} i \textcolor{Fuchsia}{from} 0 \textcolor{Fuchsia}{to} g-1 \textcolor{Fuchsia}{do} p_i=\textcolor{RoyalBlue}{random}(\textcolor{RoyalBlue}{sub}(d/2,\textcolor{ForestGreen}{ZZ}),K)
b=\textcolor{RoyalBlue}{basis}(sub(d/2,\textcolor{ForestGreen}{ZZ}),K);
bb=\textcolor{RoyalBlue}{basis}(\textcolor{RoyalBlue}{sub}(d,\textcolor{ForestGreen}{ZZ}),K);
\textcolor{Fuchsia}{for} i \textcolor{Fuchsia}{from} 0 \textcolor{Fuchsia}{to} g-1 \textcolor{Fuchsia}{do}
\textcolor{Fuchsia}{for} j \textcolor{Fuchsia}{from} 0 \textcolor{Fuchsia}{to} m-1 \textcolor{Fuchsia}{do}
M_\{i,j\}=\textcolor{RoyalBlue}{transpose}(\textcolor{RoyalBlue}{coefficients}((p_i*b_j)_0,\textcolor{RoyalBlue}{Monomials}=>bb))_1
N=M_\{0,0\};
\textcolor{Fuchsia}{for} i \textcolor{Fuchsia}{from} 0 \textcolor{Fuchsia}{to} g-1 \textcolor{Fuchsia}{do}
\textcolor{Fuchsia}{for} j \textcolor{Fuchsia}{from} 0 \textcolor{Fuchsia}{to} m-1 \textcolor{Fuchsia}{do}
N=N||M_\{i,j\}
\textcolor{RoyalBlue}{rank} N==g*m-\textcolor{RoyalBlue}{binomial}(g,2)
\end{Verbatim}
\normalsize

\printbibliography
\end{document}